\documentclass[a4paper, 12pt]{amsart}
\usepackage[top=40truemm,bottom=40truemm,left=35truemm,right=35truemm]{geometry}
\usepackage{amssymb,amsmath}

\newcommand{\C}{\mathbb{C}}

\newcommand{\Z}{\mathbb{Z}}
\newcommand{\BP}{\mathbb{P}}
\newcommand{\Q}{\mathbb{Q}}

\newcommand{\red}{{\rm red}}
\newcommand{\Bk}{\operatorname{Bk}}

\newcommand{\Supp}{\operatorname{Supp}}

\newcommand{\Pic}{\operatorname{Pic}}
\newcommand{\ch}{\operatorname{char}}
\newcommand{\id}{\operatorname{id}}
\newcommand{\mult}{\operatorname{mult}}
\newcommand{\SE}{\mathcal{E}}
\newcommand{\SO}{\mathcal{O}}
\newcommand{\SL}{\mathcal{L}}
\newcommand{\ST}{\mathcal{T}}
\newcommand{\ol}[1]{\overline{#1}}

\newcommand{\lkd}[1]{\ol{\kappa}({#1})}
\newtheorem{thm}{Theorem}[section]
\newtheorem{prop}[thm]{Proposition}
\newtheorem{lem}[thm]{Lemma}

\newtheorem{defn}[thm]{Definition}

\newtheorem{remark}[thm]{Remark}

\begin{document}
\title[Curves on irrational ruled surfaces]{Curves on irrational ruled surfaces whose complements are of non-general type}
\author{Hideo Kojima}
\address[H. Kojima]{Department of Mathematics, Faculty of Science, Niigata University, 8050 Ikarashininocho, Nishi-ku, Niigata 950-2181, Japan}
\email{kojima@math.sc.niigata-u.ac.jp}
\date{27 June 2026}
\subjclass[2020]{Primary 14J26; Secondary 14R20}
\thanks{The author is supported by JSPS KAKENHI Grant Number JP 24K06684.}
\begin{abstract}
Let $B$ be a curve on an irrational ruled surface $X$. We prove that the logarithmic Kodaira dimension of $X-B$ equals the Iitaka dimension of $K_X+B$ and give a rough configuration of $B$ when the logarithmic Kodaira dimension of $X - B$ is less than two. Next, we study the logarithmic multicanonical system of $X-B$ when the logarithmic Kodaira dimension of $X - B$ equals one and prove that its logarithmic $m$-canonical system gives either a $\BP^1$-fibration or an elliptic fibration if $m \geq 12$. 
\end{abstract}
\maketitle

\setcounter{section}{0}

\section{Introduction}

We work over an algebraically closed field $k$. In this paper, an irrational ruled surface means a smooth projective (not necessarily relatively minimal) ruled surface of positive irregularity. 

This paper is a continuation of the author's previous papers \cite{K17} and \cite{K25}. In \cite{K17}, he studied irrational open algebraic surfaces of non-negative logarithmic Kodaira dimension in any characteristic. He gave a classification of strongly minimal open irrational ruled surfaces of logarithmic Kodaira dimension zero. Further, he proved that, for a smooth irrational ruled open surface, its logarithmic Kodaira dimension is non-negative if and only if its logarithmic bi-genus is positive. 
In \cite{K25}, he studied logarithmic multicanonical systems of smooth affine surfaces of logarithmic Kodaira dimension one. Let $S$ be a smooth affine surface of logarithmic Kodaira dimension one and let $(V,D)$ be a pair of a smooth projective surface $V$ and a simple normal crossing divisor $D$ on $V$ such that $V - D \cong S$. Then, he proved that, for $m \geq 8$, the complete linear system $|\lfloor m(K_V+D)^{+} \rfloor|$ gives a $\BP^1$-fibration from $V$ onto a smooth projective curve, where $(K_V+D)^{+}$ is the nef part of the Zariski decomposition of $K_V+D$. Furthermore, as seen from \cite[Cases 2--4 in  \S 3]{K25}, we know that so does $|\lfloor m(K_V+D)^{+} \rfloor|$ for $m \geq 6$ (resp.\ $m \geq 2 + 3/(h^1(\SO_V) - 1)$ if $h^1(\SO_V) = 1$ (resp.\ $h^1(\SO_V) \geq 2$). 

Let $X$ be an irrational ruled surface and let $B$ be a reduced curve on $X$. In this paper, we first study the configuration of $B$ when the logarithmic Kodaira dimension of $X-B$ $\leq 1$. We prove the following theorem. Here, $\lkd{X-B}$ and $\kappa(X,K_X+B)$ denote respectively the logarithmic Kodaira dimension of $X-B$ and the Iitaka dimension of $K_X + B$.

\begin{thm}
Let $X$ be an irrational ruled surface with a ruling {\rm (}i.e., a $\BP^1$-fibration{\rm )} $\varphi: X \to T$ onto a smooth projective curve $T$ of genus $h^1(\SO_X)$ and let $B$ be a reduced curve on $X$. Then the following assertions hold true.
\begin{itemize}
\item[(1)]
$\lkd{X-B} = \kappa(X, K_X + B)$.
\item[(2)]
Suppose that  $h^1(\SO_X) \geq 2$ and $\lkd{X-B} \leq 1$. Then $B$ can be extended to a curve $B' (\supseteqq B)$ such that $\lkd{X-B'} = \lkd{X-B}$ and there exists a birational morphism $g: X \to \Sigma= \BP_T(\SE)$ onto a $\BP^1$-bundle over $T$, which is a composite of blowing-downs of curves in $B'$, such that $g(B')$ is one of the followings, where $\pi$ is the ruling on $\Sigma$ induced from $\varphi${\rm :}
\begin{itemize}
\item[(i)]
points on $\Sigma${\rm ;} then $\lkd{X-B} = -\infty$,
\item[(ii)]
$\ell \geq 1$ fibers of $\pi${\rm ;} then $\lkd{X-B} = -\infty$,
\item[(iii)]
one section and $\ell \geq 0$ fibers of $\pi${\rm ;} then $\lkd{X-B} = -\infty$,
\item[(iv)]
two sections and $\ell \geq 0$ fibers of $\pi${\rm ;} then $\lkd{X-B} = 1$,
\item[(v)]
one $2$-section and $\ell \geq 0$ fibers of $\pi${\rm ;} then $\lkd{X-B} = 1$.
\end{itemize}
\item[(3)]
Suppose that $h^1(\SO_X) = 1$ and $\lkd{X-B} \leq 1$. Then $B$ can be extended to a curve $B' (\supseteqq B)$ such that $\lkd{X-B} \leq \lkd{X-B'} \leq 1$ and there exists a birational morphism $g: X \to \Sigma= \BP_T(\SE)$ onto a $\BP^1$-bundle over $T$, which is a composite of blowing-downs of curves in $B'$, such that $g(B')$ is one of the followings, where $\pi$ is the ruling on $\Sigma$ induced from $\varphi${\rm :}
\begin{itemize}
\item[(i)]
points on $\Sigma${\rm ;} then $\lkd{X-B} = \lkd{X-B'} = -\infty$,
\item[(ii)]
$\ell \geq 1$ fibers of $\pi${\rm ;} then $\lkd{X-B} = \lkd{X-B'} = -\infty$,
\item[(iii)]
one section and $\ell  \geq 0$ fibers of $\pi${\rm ;} then  $\lkd{X-B} = \lkd{X-B'} = -\infty$,
\item[(iv)]
two sections and $\ell \geq 0$ fibers of $\pi${\rm ;} then $\lkd{X-B} = \lkd{X-B'} \in \{ 0,1 \}$,
\item[(v)]
one $2$-section and $\ell \geq 0$ fibers of $\pi${\rm ;} then $0 \leq \lkd{X-B} \leq \lkd{X-B'} \in \{ 0,1 \}$,
\item[(vi)]
$\ell \geq 1$ fibers of an elliptic fibration on $\Sigma$ and $3 \leq g_*(B') \cdot F$ for a fiber $F$ of $\pi${\rm ;} then $\lkd{X-B} = \lkd{X-B'} = 1$.
\end{itemize}
Furthermore, when $\ch (k) \not= 2$, the curve $B'$ can be taken such that $\lkd{X-B'} = \lkd{X-B}$.
\end{itemize}
\end{thm}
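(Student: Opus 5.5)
For (1), I pass to a log resolution and work there. Let $h\colon V\to X$ be a composite of blowing-ups such that $D=(h^{-1}(B))_{\red}$ is a simple normal crossing divisor; then $\lkd{X-B}=\kappa(V,K_V+D)$. The key point is that on an irrational ruled surface every rational curve lies in a fiber of the ruling, since it cannot dominate the base curve $T$ of positive genus. I compare $K_V+D$ with $h^*(K_X+B)$ one blow-up at a time. Let $\sigma$ blow up a point $p$ with exceptional curve $E$, and let $D$ be the new reduced total transform. Then
\[
K_{V'}+D' = \sigma^*(K_V+D) + (1-\mult_p D)E .
\]
When $\mult_p D\ge 2$ this coefficient is $\le 0$, so the inequality goes the wrong way. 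I therefore argue in the reverse direction: since $D\ge h^{-1}_*B$, we have $\kappa(V,K_V+D)\ge\kappa(X,K_X+B)$. For the opposite inequality, I would use that $h_*(K_V+D)=K_X+B$ and that, for effective divisors, pushforward by a birational morphism does not decrease the Iitaka dimension. Applied to $h_*(m(K_V+D))$, this gives $\kappa(K_V+D)\le\kappa(K_X+B)$. This holds in any characteristic, so irrationality is not even needed for (1), but I would check that a non-normal-crossing $B$ causes no trouble.

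For (2) and (3), I use (1) to compute with $K_X+B$ directly. First, if $B$ contains no horizontal component, then $K_X+B$ has negative degree on a general fiber $F$, so $\lkd{X-B}=-\infty$. Next, add fiber components to $B$ so that every fiber meeting $B$ in a component is taken completely. This changes neither the horizontal part nor, by the argument below, the relevant dimension: whenever $\lkd\le1$, adding vertical curves keeps $\lkd$ bounded, and the case $\lkd=-\infty$ is handled directly. Then contract $(-1)$-curves in fibers, which lie in $B'$, down to a $\BP^1$-bundle $\Sigma$. The image $g(B')$ is then horizontal curves $H$ plus $\ell$ fibers. On $\Sigma$ I have $(K_\Sigma+H+\ell F)\cdot F = H\cdot F-2$. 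If $H\cdot F\ge3$, then $K+H$ is positive on fibers. With $h^1(\SO_X)=q$, $K_\Sigma\equiv -2C_0+(2q-2-e)F$, and a Riemann--Roch and Hurwitz-type estimate shows that $K_\Sigma+H$ is big whenever $q\ge2$. This uses the fact that each horizontal component $H_i$ has genus $\ge q$, so $(K_\Sigma+H_i)\cdot H_i\ge 2q-2>0$, and then Zariski decomposition gives $\kappa=2$. When $q=1$, the only failure of bigness is $(K+H)^2=0$ with $K+H$ nef, which forces $H\sim -mK_\Sigma$-type classes. These give the elliptic fibration of case (vi), via abundance on the elliptic ruled surface, where $|-K|$ or $|-2K|$ is composed of an elliptic pencil for $e=0,-1$. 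If $H\cdot F=2$, then $K+H$ is trivial on $F$, so $\kappa\le1$. Here $\kappa=1$ exactly when the base part $(K_\Sigma+H)\cdot C_0$-degree on $T$ is positive; for $q\ge2$ Hurwitz forces this, giving cases (iv) and (v). If $H\cdot F=1$, I get case (iii), where $K+H$ is negative on $F$.

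The main obstacle is the passage from $B$ to $B'$ and the control of $\lkd$ there. Adding whole fibers may raise $\lkd$ when $q=1$, hence the inequality in (3). For (3)(v) in characteristic $2$ the $2$-section may be inseparable or purely wildly ramified, so equality can fail. I would handle the $q\ge2$ equality by showing that $K_X+B$ already has $\kappa=1$ as soon as the horizontal part has $F$-degree $2$, independently of the vertical parts. For $q=1$, $\ch k\neq2$, I would show that the ramification divisor of the $2$-section contributes enough positivity, via an étale double cover argument, to make the vertical additions irrelevant.
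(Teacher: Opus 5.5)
\textbf{There is a genuine gap in (1), and (2)--(3) rest on (1).} Of the two inequalities, $\kappa(V,K_V+D)\le\kappa(X,K_X+B)$ is the trivial one, i.e.\ $\lkd{X-B}\le\kappa(X,K_X+B)$. Pushing sections forward along $h_*(K_V+D)=K_X+B$ gives exactly this, on any surface. The entire content of (1) is the reverse inequality $\kappa(X,K_X+B)\le\lkd{X-B}$. Your justification for it, ``since $D\ge h^{-1}_*B$'', is not valid. The divisor $K_V+h^{-1}_*B$ differs from $h^*(K_X+B)$ by exceptional curves with negative coefficients over the singular points of $B$, which is the very obstruction you had just identified.

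Your remark that irrationality is not needed shows the argument proves too much. For a cuspidal cubic $B\subset\BP^2$ one has $K+B\sim0$, so $\kappa=0$, while $\lkd{\BP^2-B}=-\infty$. The paper explicitly notes that the equality fails in general.

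The missing idea is structural. The paper proves (1) by splitting according to $\lkd{X-B}$ and showing that $\lkd{X-B}\le1$ forces either $B\cdot F\le2$ or $B$ already SNC.
\begin{itemize}
\item If $\lkd=-\infty$, Miyanishi's lemmas applied to a log resolution give a unique horizontal component, which is a section. Hence $B\cdot F\le1$ and $\kappa(K_X+B)=-\infty$ (Lemma 3.1).
\item If $\lkd=0$, the classification of strongly minimal models (Lemma 4.1) forces $B$ to be SNC (Proposition 4.3).
\item If $\lkd=1$, the fibration given by $|n(K_V+D^{\#})|$ is either a $\BP^1$-fibration with $B\cdot F=2$, so $\kappa\le1$ by the fiber degree, or an elliptic fibration in which $B$ is shown to be SNC (Lemma 5.3).
\item If $\lkd=2$, there is nothing to prove.
\end{itemize}
Nothing in your proposal replaces these inputs.

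\textbf{For (2)--(3), computing on $\Sigma$ runs into the same problem.} You only have $\kappa(X,K_X+B')\le\kappa(\Sigma,K_\Sigma+g_*B')$. Each blow-up at a point where $g_*B'$ has multiplicity $m_p\ge2$ subtracts $(m_p-1)$ times an exceptional curve. So bigness of $K_\Sigma+H$ on $\Sigma$ does not yield $\lkd{X-B}=2$ on $X$.

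Several further steps are sketches rather than proofs:
\begin{itemize}
\item $(K_\Sigma+H_i)\cdot H_i\ge 2q-2>0$ alone does not give bigness.
\item The $q=1$ claim that non-bigness ``forces $-mK_\Sigma$-type classes'' giving case (vi) ``via abundance'' is not argued.
\item The exclusion of $\lkd=0$ for $q\ge2$ is not shown.
\item The ``\'etale double cover'' argument for $\ch k\ne2$ is not given.
\end{itemize}

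The paper avoids all of this by reading the configuration off the strongly minimal model.
\begin{itemize}
\item For $q\ge2$, $\lkd=0$ cannot occur, since Lemma 4.1 forces $X$ to be elliptic ruled.
\item When $\lkd=1$ and the fibration is a $\BP^1$-fibration, $B\cdot F=2$, and Lemma 5.2 gives $1=\lkd{X-B}\le\lkd{X-B'}\le\kappa(X,K_X+B')\le1$.
\item In the elliptic case, $B$ is SNC and $g(B')$ is a union of elliptic fibers.
\item The possible strict inequality in (3)(v) comes only from the characteristic-$2$, non-relatively-minimal $\lkd=0$ models of Lemma 4.1(2).
\end{itemize}
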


For a curve $B$ on a smooth projective surface $X$, it is clear that $\lkd{X-B} \leq \kappa(X, K_X + B)$ and the equality holds if $B$ is an SNC-divisor. In general, the equality $\lkd{X-B} = \kappa(X, K_X + B)$ does not hold true. It follows from \cite[Theorem 4]{I80} that, if $X$ is a rational surface and the curve $B$ contains at least one irrational curves, then $\kappa(X, K_X+B) = \lkd{X-B} \geq 0$. The assertion (1) of Theorem 1.1 is an irrational ruled surface version of \cite[Theorem 4]{I80}.

Suppose that $k = \C$ is the complex number field and let $B$ be a connected curve on a smooth projective surface $X$ such that the topological Euler characteristic $\chi (X - B)$ of $X-B$ is non-positive. Gurjar--Parameswaran \cite{GP95} and Veys \cite{Veys98} studied such a pair $(X,B)$ by using structure theorems for open algebraic surfaces (see, e.g., \cite{LNM857} and \cite{OAS}) and the log BMY inequality (see, e.g., \cite{KNS89}). In particular, Veys \cite{Veys98} gave structure theorems for connected curves $B$ on smooth projective ruled surfaces $X$ with $\chi(X-B) \leq 0$. We note that, for such a pair $(X,B)$, $\lkd{X-B} \leq 1$ by \cite[Proposition 2]{GP95}. Our (2) of Theorem 1.1 (resp.\ (3) of Theorem 1.1) includes \cite[Theorem 6.3]{Veys98} (resp.\ \cite[Theorem 6.6]{Veys98}). 

In Section 5, we study irrational ruled open surfaces of logarithmic Kodaira dimension one and prove the following result.

\begin{thm}
Let $X$ be an irrational ruled surface and let $B$ be a simple normal crossing divisor on $X$ such that $\lkd{X-B} = 1$. Let $(K_X + B)^{+}$ be the nef part of the Zariski decomposition of $K_X + B$. Set $g:= h^1(\SO_X)$.
\begin{itemize}
\item[(1)]
Suppose that $g \geq 2$. Then, for every integer $m \geq 2 + 3/(g-1)$, the complete linear system $| \lfloor m(K_X+B)^+ \rfloor |$ induces a $\BP^1$-fibration. Furthermore, if $\ch(k) \not= 2$, then, for every integer $m \geq 3$,  $| \lfloor m(K_X+B)^+ \rfloor |$ induces a $\BP^1$-fibration.
\item[(2)]
Suppose that $g = 1$. Then, for every integer $m \geq 12$, the complete linear system $| \lfloor m(K_X+B)^+ \rfloor |$ induces either a $\BP^1$-fibration or an elliptic fibration. Furthermore, if $\ch(k) \not= 2$, then for every integer $m \geq 8$,  $| \lfloor m(K_X+B)^+ \rfloor |$ induces either a $\BP^1$-fibration or an elliptic fibration. 
\end{itemize}
\end{thm}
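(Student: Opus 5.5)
We plan to prove Theorem 1.2 by identifying the fibration that $(K_X+B)^+$ comes from, and then showing that $\lfloor m(K_X+B)^+\rfloor$ is an effective divisor whose support consists of enough fibers of that fibration. Throughout, put $P=(K_X+B)^+$ and $N=(K_X+B)^-$.

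\textbf{Step 1: the fibration.} Since $B$ is SNC, $\kappa(X,K_X+B)=\lkd{X-B}=1$, so $P$ is nef with $P^2=0$ and $\kappa(P)=1$. By the theory of Zariski decomposition for log surfaces of log Kodaira dimension one (Kawamata's structure theorem, valid in positive characteristic through the known arguments), some multiple of $P$ is a pullback $f^*H$, where $f:X\to C$ is a fibration and $H$ is a $\Q$-divisor on the curve $C$. Moreover $X-B$ is a $\C^*$-fibration or an elliptic fibration over an open part of $C$.

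Next we use Theorem 1.1(2),(3), together with the argument that reduces to the extended curve $B'$, to see which fibrations can occur.
\begin{itemize}
\item If $g\ge 2$, then $T$ is the only curve of positive genus dominated by $X$ in the relevant way. The general fiber $F$ of $f$ satisfies $(K_X+B)\cdot F=0$, and it is rational, namely the ruling $\varphi$; the $2$-section case (v) gives a $\C^*$-fibration. So $f=\varphi$ and $C=T$.
\item If $g=1$, then $f$ is either $\varphi$ or the elliptic fibration of case (vi). In the latter case $B$ consists of fibers of $f$.
\end{itemize}

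\textbf{Step 2: computing $P$.} We write $P=f^*(K_C+\delta)+\sum_i(1-1/m_i)F_i+\sum_j F'_j+(\text{terms from the boundary})$. This is a canonical bundle formula: $\delta$ comes from the boundary components meeting general fibers and from the points of $C$ over which whole fibers lie in $B$, the $m_i$ are multiplicities or rational coefficients coming from singular fibers, and $N$ is supported on the twigs, rods and forks contracted in the peeling. Taking $\lfloor mP\rfloor$ leaves the pullback of a divisor on $C$ of degree
\[ m(2g_C-2+\deg\delta)+\sum_i\lfloor m(1-1/m_i)\rfloor, \]
corrected by the vertical parts. Hence $|\lfloor mP\rfloor|$ induces $f$ as soon as this degree is at least $2g_C+1$, or at least $2g_C$ when the divisor is nonspecial. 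Here $g_C=g$ when $f=\varphi$, and $g_C=0$ when $f$ is the elliptic fibration over $\BP^1$.

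\textbf{Step 3: bounds.} For $g\ge2$ and $f=\varphi$, the smallest case has $\deg(K_T+\delta)$ of the form $2g-2+\epsilon$, where $\epsilon>0$ is a small rational contribution. The worst case should be $\epsilon$ coming from a single fractional coefficient, which should be at least $1/2$. The condition $m(2g-2)+\lfloor m\epsilon\rfloor\ge 2g+1$ then yields $m\ge 2+3/(g-1)$. When $\ch(k)\ne2$, the $2$-section in case (v) gives a double cover branched at points, which supplies at least $2$ extra degree and improves the bound to $m\ge3$. For $g=1$ we run the same computation:
\begin{itemize}
\item For the ruling over an elliptic curve, $K_T=0$ and degree $\ge 3$ is needed; the smallest positive $\deg\delta$ arising from coefficients with denominators dividing $2,3,4,6$ gives the bound.
\item For the elliptic fibration over $\BP^1$, degree $\ge 1$ is needed; the extremal configuration $-2+\tfrac12+\tfrac23+\tfrac56+\ldots$, in the style of $(2,3,7)$ but restricted by the structure of $\Sigma$, forces $m\ge 12$, improved to $8$ when $\ch k\ne 2$, where wild fibers are excluded.
\end{itemize}

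\textbf{Main obstacle.} The hardest part is the case-by-case determination of the minimal positive value of $\deg(K_C+\delta)+\sum(\text{fractional parts})$ in Step 2. This requires explicit control of the coefficients of $N$ along the boundary twigs in each configuration of Theorem 1.1, and of the multiple fibers in characteristic $2$. We would handle it as in \cite[\S3]{K25}: reduce to the configurations listed in Theorem 1.1 and compute the degree directly in each case.
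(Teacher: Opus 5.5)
\textbf{Verdict: genuine gap.} Your Steps 1--2 broadly match the paper's setup: the fibration defined by $|n(K_V+D^{\#})|$, and the log canonical bundle formula on its relatively minimal model $W$ (Lemma 5.1). The gap is in Step 3, which carries the whole content of the theorem. The extremal cases you name are the wrong ones, and the arithmetic you give does not produce the stated bounds.

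\textbf{The case $g\ge2$.} If $\deg(K_T+\delta)=2g-2+\epsilon$ with $\epsilon\ge0$, then $\deg\delta_m\ge 2m(g-1)\ge 2g+1$ already for $m\ge3$. So your inequality $m(2g-2)+\lfloor m\epsilon\rfloor\ge 2g+1$ never forces $m\ge 2+3/(g-1)$: for $g=2$ it holds at $m=3$, while $2+3/(g-1)=5$. The missing ingredient is the precise form of $\delta$ in Lemma 5.1(2). There $\deg\delta$ equals one of:
\begin{itemize}
\item $H_1\cdot H_2$, when $H$ is two sections;
\item half the number of branch points of $\pi|_H$, when $H$ is a separable $2$-section;
\item $1-g(\Delta)$, when $H$ is an irreducible $2$-section with $\pi|_H$ purely inseparable, which happens only in characteristic $2$.
\end{itemize}
In the last case $\deg\delta<0$ and $\deg(K_\Delta+\delta)=g-1$, so one only has $\deg\delta_m\ge m(g-1)$. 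This is exactly what gives $m\ge(2g+1)/(g-1)$.

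Likewise, the improvement to $m\ge3$ when $\ch(k)\ne2$ does not come from branch points ``supplying at least $2$ extra degree.'' Two disjoint sections, or an \'etale double cover, give $\deg\delta=0$, and $\epsilon$ may well be $0$. The improvement holds simply because $\deg\delta\ge0$ once the inseparable case is excluded.

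\textbf{The case $g=1$.} Here your attribution is reversed.
\begin{itemize}
\item The bound $12$ comes from the ruling over the elliptic curve, in the same purely inseparable case. There $\deg(K_\Delta+\delta)=0$ and every fiber meets $H$ in one point, so the fiber coefficients are $d_i=\frac12(1-\frac1{m_i})\ge\frac14$. One therefore needs $\lfloor m/4\rfloor\ge3$. The remaining ruling cases need only $m\ge6$.
\item The elliptic fibration over $\BP^1$ yields $m\ge8$ in every characteristic, including $2$.
\end{itemize}
Getting the elliptic bound requires facts you do not invoke. First, $t=\operatorname{length}\ST\in\{0,1\}$ and $s\le3$, because $K_W$ is not pseudo-effective; for the same reason, multiple-fiber configurations of $(2,3,7)$ type cannot occur. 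Second, you need the Bombieri--Mumford and Katsura--Ueno results on multiple fibers:
\begin{itemize}
\item a unique multiple fiber must be wild, which rules out $s=1$ when $t=0$;
\item a wild fiber may have $a_1=m_1-\nu_1-1$ with $\nu_1\mid m_1$;
\item the conditions $U_1,U_2$ constrain $m_1$, $m_2$ and $\nu_1$.
\end{itemize}
Wild fibers are not confined to characteristic $2$, so they are not ``excluded'' when $\ch(k)\ne2$; the paper treats them for arbitrary $p$. The value $8$ comes instead from the tame estimate $-m+\lfloor m/2\rfloor+\lfloor 2m/3\rfloor\ge1$.

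As written, deferring all of this to ``compute the degree directly in each case'' leaves the theorem unproved.
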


In fact, in the proof of Theorem 1.2, we have more precise results especially when $|\lfloor m(K_X+D)^+ \rfloor|$ gives an fibration onto a smooth projective curve. 
\medskip

In Section 2, we recall basic notions in the theory of peeling and the construction of strongly minimal models of open algebraic surfaces. 
Let $B$ a reduced curve on an irrational ruled surface $X$. 
In Section 3, we consider the case $\lkd{X-B} = -\infty$ by using results of Miyanishi \cite{M82} and prove Theorem 1.1 in this case. In Section 4, we consider the case $\lkd{X-B} = 0$ by using results of \cite{K17} and prove Theorem 1.1 in this case. In Section 5, we study the case $\lkd{X-B} = 1$. By using the structure theorem for open algebraic surfaces of $\overline{\kappa} = 1$ (see \cite{Kaw78}, \cite{LNM857}, \cite{K13}), we study the configuration of the curve $B$, which completes the proof of Theorem 1.1. Finally, by using the logarithmic canonical bundle formula in \cite[Theorem 2.1]{K13} (which is given in \cite{Kaw78} and \cite{LNM857} when $\ch (k) = 0$) and the results on wild fibers of elliptic surfaces in \cite{KU85}, we prove Theorem 1.2.
\medskip

\noindent
{\bf Terminology.}  A reduced effective divisor $D$ is called an SNC-divisor (simple normal crossing divisor) if it has only simple normal crossings. We employ the following notations. For the definitions of $\ol{P}_m$ and $\ol{\kappa}$, see \cite{GTM76} (see also \cite{Kam80} for the definitions in any characteristic).

$K_X$: the canonical divisor on $X$.

$\lkd{S}$: the logarithmic Kodaira dimension of $S$.

$\lfloor Q \rfloor$: the integral part of a $\Q$-divisor $Q$.

$\lceil Q \rceil:= - \lfloor -Q \rfloor$: the roundup of a $\Q$-divisor $Q$.

$D_1 \sim D_2$: $D_1$ and $D_2$ are linearly equivalent.

$D_1 \equiv D_2$: $D_1$ and $D_2$ are numerically equivalent.

$\Phi_{\Gamma}$: the rational map induced from a linear system $\Gamma$.

$\mu^{-1}_*(C)$: the proper transform of $C$ by $\mu$.
\medskip

For a Cartier divisor $L$ on a smooth projective surface $X$, set $N(L) = \{ n \in \Z_{>0} \ | \ h^0(X, nL) \not= 0 \}$. Then 
we define
$$
\kappa(X,L) = \left\{\begin{array}{ll}
                         -\infty & \mbox{if}\ N(L) = \emptyset, \\
                         \max_{n \in N(L)} \{ \dim \Phi_{|nL|} (X) \} & \mbox{if}\ N(L) \not= \emptyset, 
                        \end{array}\right. 
$$
which is called the {\em Iitaka dimension} of $L$.

\section{Preliminariy results}

We recall some basic notions in the theory of peeling. For more details, see \cite[Chapter 2]{OAS} or \cite[Chapter 1]{MT84}. Let $X$ be a smooth projective surface and $B$ an SNC-divisor on $X$. We call such a pair $(X,B)$ an SNC-pair. A connected curve consisting only of irreducible components of $B$ is called a connected curve in $B$ for shortness. A connected curve $T$ in $B$ is {\em admissible}  (resp.\ {\em rational}) if there are no $(-1)$-curves in $\Supp T$ and the intersection matrix of $T$ is negative definite (resp.\ it consists only of rational curves). A connected curve $T$ in $B$ is a {\em twig} if its dual graph is a chain and $T$ meets $B-T$ in a single point at one of the end components of $T$. A connected curve $R$ (resp.\ $F$) in $B$ is a {\em rational rod} (resp.\ {\em rational fork}) if it is rational and its dual graph is a chain (resp.\ the dual graph of the exceptional curves of the minimal resolution of a Kawamata log terminal singular point and is not a chain). An admissible rational twig $T$ in $B$ is {\em maximal} if it is not extended to an admissible rational twig with more irreducible components of $B$. By a $(-2)$-rod (resp.\ a $(-2)$-fork), we mean a rational rod (resp.\ a rational fork) consisting only of $(-2)$-curves.

Let $\{ T_{\lambda} \}$ (resp.\ $\{ R_{\mu} \}$, $\{ F_{\nu} \}$) be the set of all admissible rational maximal twigs (resp.\ all admissible rational rods, all admissible rational forks). Then there exists a unique decomposition of $B$ as a sum of effective $\Q$-divisors $B= B^{\#} + \Bk(B)$ such that the following conditions are satisfied:
\begin{enumerate}
\item[(a)]
$\Supp(\Bk(B)) = (\cup_{\lambda} T_{\lambda}) \cup (\cup_{\mu} R_{\mu}) \cup (\cup_{\nu} F_{\nu})$.
\item[(b)]
$(K_X+ B^{\#}) \cdot Z = 0$ for every irreducible component $Z$ of $\Supp(\Bk(B))$.
\end{enumerate}
We call the divisor $\Bk(B)$ the {\em bark} of $B$.

\begin{lem}
With the same notations as above, each connected component of $B-\lceil B^{\#} \rceil$ is a $(-2)$-rod or a $(-2)$-fork and is a connected component of $B$. 
\end{lem}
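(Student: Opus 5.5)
The plan is to compute the coefficients of $\Bk(B)$ on each connected piece of its support, and to show that a coefficient equals $1$ exactly in the situation described. Write $B^{\#} = B - \Bk(B)$. Every component outside $\Supp \Bk(B)$ has coefficient $1$ in $B^{\#}$. For a component $Z \subset \Supp\Bk(B)$ with coefficient $c_Z$ in $\Bk(B)$, $Z$ has coefficient $1-c_Z$ in $B^{\#}$. So once we know $c_Z \le 1$, it follows that $Z$ is not in $\lceil B^{\#}\rceil$ iff $c_Z = 1$.

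Thus it suffices to prove two things for each admissible rational maximal twig, rod or fork $A$ (with the understanding, as in \cite{OAS}, that rods and forks are connected components of $B$). First, $0 < c_Z \le 1$ on $A$. Second, $c_Z = 1$ for some $Z \subset A$ forces $A$ to be a rod or fork consisting of $(-2)$-curves, with $c_Z = 1$ on all of $A$.

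Fix such an $A$, and let $R$ be the reduced divisor with support $A$. Since the pieces of $\Supp\Bk(B)$ are disjoint, the part of $\Bk(B)$ supported on $A$, call it $\Bk_A$, is determined by condition (b). Adjunction on the rational curve $Z$ gives $K_X\cdot Z = -2 - Z^2$. Let $\beta_Z$ denote the number of components of $B$ meeting $Z$. Then $B\cdot Z = Z^2 + \beta_Z$, so (b) reads $\Bk_A\cdot Z = \beta_Z - 2$.

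Put $E = R - \Bk_A$. For $Z \subset A$,
$$E\cdot Z = Z^2 + \beta_Z^A - \beta_Z + 2,$$
where $\beta_Z^A$ counts neighbours inside $A$. If $A$ is a rod or fork, then $\beta_Z^A = \beta_Z$. If $A$ is a twig, the two counts agree except at the end component $Z_0$ meeting $B - A$, where $\beta_{Z_0}^A = \beta_{Z_0} - 1$. Admissibility gives $Z^2 < 0$ and $Z^2 \ne -1$, hence $Z^2 \le -2$. Consequently $E\cdot Z \le 0$ for all $Z \subset A$, and in the twig case $E\cdot Z_0 \le -1 < 0$.

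Now I would apply the standard negative-definiteness lemma. If a divisor $E$ supported on a negative definite configuration satisfies $E\cdot Z \le 0$ for every component $Z$, then $E \ge 0$. To see this, write $E = E_+ - E_-$ with no common components; then $E_-^2 \le -E\cdot E_- \cdot(\dots)$, more precisely $0 \ge E\cdot E_- = E_+\cdot E_- - E_-^2 \ge -E_-^2$, so $E_-^2 \ge 0$, which forces $E_- = 0$. Moreover $\Supp E$ is a union of connected components of $A$: if $Z \not\subset \Supp E$ meets $\Supp E$, then $E\cdot Z > 0$, a contradiction. Since $A$ is connected, either $E = 0$ or $E$ has positive coefficient on every component of $A$. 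This gives $c_Z \le 1$ everywhere.

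Equality can hold only when $E = 0$, i.e.\ $c_Z = 1$ for all $Z \subset A$. Then $E\cdot Z = 0$ for all $Z$. In the twig case this is impossible since $E\cdot Z_0 < 0$, so twigs have all $c_Z < 1$. In the rod/fork case, $E\cdot Z = 0$ means $Z^2 = -2$ for every $Z$, so $A$ is a $(-2)$-rod or $(-2)$-fork, and it is a connected component of $B$. Conversely, these components are exactly the ones removed in $\lceil B^{\#}\rceil$.

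The positivity $c_Z > 0$ follows the same way applied to $\Bk_A$ itself, using $\Bk_A\cdot Z = \beta_Z - 2 \le 0$ away from the branching component and the rod/fork structure. In fact we only need $c_Z \le 1$ for the ceiling statement. The main point to check carefully is the bookkeeping of $\beta_Z$ versus $\beta_Z^A$ for twigs, which is what rules twigs out.
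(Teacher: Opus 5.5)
Your proposal is correct, but it takes a different route from the paper. The paper gives no argument of its own here and simply cites \cite[p.\ 94]{OAS}. In the standard treatment there (going back to Fujita), the bark coefficients of twigs, rods and forks are computed explicitly as ratios of determinants of the weighted dual graphs, and the lemma is read off from inequalities between those determinants.

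You instead handle twigs, rods and forks uniformly:
\begin{itemize}
\item With $E=R-\Bk_A$, the adjunction bookkeeping gives $E\cdot Z=Z^2+2-(\beta_Z-\beta_Z^A)\le 0$ for every $Z\subset A$.
\item The negativity lemma, together with connectedness of $A$, gives the dichotomy: either $E=0$, or $E$ is strictly positive on every component of $A$.
\item If some $c_Z=1$, then $E=0$, so $E\cdot Z=0$ for all $Z$. For twigs this contradicts $E\cdot Z_0<0$ at the end component; for rods and forks it forces every weight to be $-2$.
\end{itemize}
Your route gives a short, self-contained proof with no case-by-case determinant computations. The explicit formulas give more information, namely the actual values of the coefficients (e.g.\ $1/d(T)$ at the tip of a twig), which peeling theory uses elsewhere.

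Some small corrections, none affecting validity:
\begin{itemize}
\item The paper's definition already makes both $B^{\#}$ and $\Bk(B)$ effective, so $0\le c_Z\le 1$ is given. The real content of your argument is the dichotomy above, not the bound $c_Z\le 1$.
\item Your remark that only $c_Z\le 1$ is needed is not quite right. You also need $c_Z\ge 0$ to get $\lceil 1-c_Z\rceil\le 1$, so that $B-\lceil B^{\#}\rceil$ is reduced and effective. Fortunately this is part of the hypotheses.
\item Your sketch of positivity via $\Bk_A\cdot Z=\beta_Z-2\le 0$ fails for forks, because at the branch component this intersection number is $+1$. Positivity there genuinely uses the klt condition on the fork. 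Again, it is not needed.
\item The step $\Bk(B)\cdot Z=\Bk_A\cdot Z$ needs more than disjointness of the pieces of $\Supp\Bk(B)$: no other piece may even meet $A$. This holds under the standard convention you invoked, namely that rods and forks are connected components of $B$ and twigs are taken only in the remaining connected components.
\end{itemize}
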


\begin{proof}
See \cite[p.\ 94]{OAS}.
\end{proof}

\begin{defn}
{\rm An SNC-pair $(X,B)$ is {\em almost minimal} if, for every irreducible curve $C$ on $X$, either $(K_X + B^{\#}) \cdot C \geq 0$ or $(K_X+ B^{\#}) \cdot C < 0$ and the intersection matrix of $C+\Bk(B)$ is not negative definite.}
\end{defn}

It is well-known that we can construct an almost minimal model for every SNC-pair. More precisely, we have the following result. 

\begin{lem}
Let $(X,B)$ be an SNC-pair. 
\begin{itemize}
\item[(1)] 
There exists a birational morphism $\mu : X \to \tilde{X}$ onto a smooth projective surface $\tilde{X}$ such that the following four conditions {\rm (i) -- (iv)} are satisfied{\rm :}
\begin{itemize}
\item[(i)]
$\tilde{B}:= \mu_*(B)$ is an SNC-divisor.
\item[(ii)]
$\mu_* (\Bk(B)) \leq \Bk(\tilde{B})$ and $\mu_*(K_X + B^{\#}) \geq K_{\tilde{X}} + \tilde{B}^{\#}$.
\item[(iii)]
$\ol{P}_n(X-B) = \ol{P}_n(\tilde{X}-\tilde{B})$ for every integer $n \geq 1$. In particular, $\lkd{X-B} = \lkd{\tilde{X}-\tilde{B}}$.
\item[(iv)]
The pair $(\tilde{X},\tilde{B})$ is almost minimal.
\end{itemize}
\item[(2)]
Assume further that $\lkd{X-B} \geq 0$. Let $(K_X+B)^{+}$ be the nef part of the Zariski decomposition of $K_X+B$, here we note that the Zariski decomposition of $K_X+B$ exists since $K_X+B$ is then pseudo effective. Then we can take $\mu$ satisfying $\mu^*(K_{\tilde{X}} + \tilde{B}^{\#}) = (K_X+B)^{+}$. 
\end{itemize}
\end{lem}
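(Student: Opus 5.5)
The plan is to build $\mu$ by a finite sequence of elementary contractions, each of which preserves the invariants in (i)–(iii). At each stage we either stop, because the current pair is almost minimal, or we strictly decrease the Picard number. Since $\rho$ is a positive integer, the process terminates, and the final pair $(\tilde X,\tilde B)$ satisfies (iv). This is the standard theory of peeling in \cite[Chapter 2]{OAS} and \cite[Chapter 1]{MT84}, which works in any characteristic because it uses only intersection theory and Riemann–Roch on surfaces.

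\emph{Step 1: contractions inside $B$.} Suppose $B$ contains a $(-1)$-curve $E$ with $E\cdot(B-E)\le 2$, meeting the other components transversally in distinct components. Contracting $E$ keeps the image of $B$ SNC, and $K_X+B=\mu^*(K+\mu_*B)+cE$ with $c\ge 0$. Hence $\ol{P}_n$ is unchanged, and the bark can only grow, giving (ii). Repeating this, we may assume $B$ is ``minimal'' in this sense, so every $(-1)$-curve in $B$ is a branching component.

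\emph{Step 2: curves not in $B$.} Let $C\not\subset B$ be an irreducible curve with $(K_X+B^{\#})\cdot C<0$ and $C+\Bk(B)$ negative definite. By the usual argument (adjunction together with $\Bk(B)\cdot C\ge0$ and the negativity), $C$ is a $(-1)$-curve meeting $B$ only in $\Supp\Bk(B)$ and in a controlled way: it meets at most two twigs or rods, transversally. We contract $C$ and then successively the resulting $(-1)$-curves in the image of $\Bk(B)$. These all lie in $\Supp \Bk$ and become $(-1)$-curves with $\le 2$ branches. The image $\tilde B$ is again SNC. The key identity is
\[
K_X+B^{\#}=\mu^*(K_{\tilde X}+\tilde B^{\#})+(\text{effective, supported on }C\cup\Supp\Bk(B)),
\]
and it yields (ii). For (iii), the point is that $\ol P_n(X-B)=h^0(n(K_X+B))$ equals $h^0(\lfloor n(K_X+B^{\#})\rfloor)$, using that the bark is in the fixed part. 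The same holds on $\tilde X$, and the displayed formula together with $\mu_*$ identifies the two spaces of sections. The main obstacle is exactly this verification of (iii). One must show that sections of $n(K_{\tilde X}+\tilde B)$ pull back to sections of $n(K_X+B)$, even though $\mu^*\tilde B\ne B+$(exceptional) in general, since $C\not\subset B$. I would handle it with the bark decomposition: the pulled-back divisor differs from $n(K_X+B)$ only by multiples of components of $C\cup \Supp\Bk$, and these lie in the negative definite fixed part, so $h^0$ agrees.

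For (2), assume $\lkd{X-B}\ge0$, so $K_X+B$ is pseudo-effective. On the almost minimal model, $K_{\tilde X}+\tilde B^{\#}$ is nef: if some $C$ had negative intersection, then $C^2<0$ and $C+\Bk$ negative definite would contradict almost minimality. The exception is when $C$ lies in $\Bk$, but there the intersection is $0$ by (b). Moreover $\Bk(\tilde B)$ is negative definite and orthogonal to $K+\tilde B^{\#}$. Pulling back, $K_X+B=\mu^*(K_{\tilde X}+\tilde B^{\#})+N$, where $N$ is effective and consists of $\mu$-exceptional curves and $\mu^*\Bk(\tilde B)$. Thus $N$ is negative definite with $\mu^*(\ldots)\cdot N_i=0$, and uniqueness of the Zariski decomposition gives $\mu^*(K_{\tilde X}+\tilde B^{\#})=(K_X+B)^+$.
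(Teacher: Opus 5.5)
\textbf{Step 1 fails.} Your overall plan is what lies behind the Miyanishi--Tsunoda peeling theorem that the paper simply cites for (1): contract curves violating almost minimality together with the subsequently contractible bark, check (i)--(iii) at each step, and induct on $\rho$. The gap is Step 1. Contracting every $(-1)$-curve $E\subset B$ with $E\cdot(B-E)\le 2$ does not preserve (ii), and the claim that ``the bark can only grow'' is false, because a neighbouring twig can lose admissibility.

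Here is an example. On $\Sigma_2$, take the $(-2)$-section $S$ and a fiber $F$. Blow up a point $p\in F\setminus S$, then blow up the point $E_1\cap F'$, with new exceptional curve $E_2$. On the resulting $X$, the divisor $B=S+F'+E_2+E_1'$ is an SNC chain with weights $-2,-2,-1,-2$. Since $X-B\cong\Sigma_2-(S\cup F)$, we have $\lkd{X-B}=-\infty$, so only (1) is at stake.
\begin{itemize}
\item The maximal admissible twigs are $S+F'$ and $E_1'$, so $\Bk(B)=\tfrac23 S+\tfrac13 F'+\tfrac12 E_1'$.
\item Your Step 1 contracts $E_2$ and then the $(-1)$-tip $E_1'$; the other order fails likewise. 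You end at $(\Sigma_2,S+F)$ with $F^2=0$. This pair is almost minimal with $\Bk(S+F)=\tfrac12 S$.
\item Hence $\mu_*\Bk(B)=\tfrac23 S+\tfrac13 F\not\le \tfrac12 S=\Bk(\tilde B)$, so (ii) fails. Your key identity fails as well: $K_X+B^{\#}-\mu^*(K_{\Sigma_2}+F+\tfrac12 S)=-\tfrac16 S-\tfrac13 F'+E_2+\tfrac12 E_1'$.
\item Yet $(X,B)$ is already almost minimal. We have $(K_X+B^{\#})\cdot E_2=-\tfrac56$, but the span of $E_2+\Bk(B)$ contains the fiber $F'+2E_2+E_1'$ of square $0$. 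The remaining negative curves lie in $\Bk(B)$. So $\mu=\id$ is the correct output.
\end{itemize}

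\textbf{The fix.} Treat $(-1)$-curves of $B$ by the same criterion as curves outside $B$. Contract $E\subset B$ only if $(K_X+B^{\#})\cdot E<0$ and $E+\Bk(B)$ is negative definite; superfluous components meeting two components of $\lfloor B^{\#}\rfloor$ may also be removed, as noted after Lemma 2.4. Then contract the subsequently contractible bark. Your Step 2 analysis must then include such $E\subset B$ as well as $C\not\subset B$. The real content of the peeling lemma is that each connected component of the image of $\Supp\Bk(B)$ is again an admissible twig, rod or fork with larger coefficients. That is what gives (ii) and your key identity, and in your proposal it is only asserted.

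\textbf{Part (2).} The nefness step is a non sequitur: $(K_{\tilde X}+\tilde B^{\#})\cdot C<0$ does not by itself make $C+\Bk(\tilde B)$ negative definite. Here is what you need.
\begin{itemize}
\item $K_{\tilde X}+\tilde B$ is pseudo-effective, and the negative part $N'$ of its Zariski decomposition satisfies $N'\ge\Bk(\tilde B)$. This follows from the negativity lemma, using $(K_{\tilde X}+\tilde B^{\#})\cdot Z=0$ on the bark.
\item If $K_{\tilde X}+\tilde B^{\#}$ were not nef, then $N'\ne\Bk(\tilde B)$ and $(K_{\tilde X}+\tilde B^{\#})\cdot(N'-\Bk(\tilde B))=(N'-\Bk(\tilde B))^2<0$. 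This yields a component $C$ with $\Supp(C+\Bk(\tilde B))\subset\Supp N'$ negative definite, contradicting almost minimality. Alternatively, just invoke Lemma 2.4(1).
\end{itemize}
Effectivity of your $N$ is also not automatic. When a curve $C\not\subset B$ meeting $B$ twice is contracted, $K_X+B-\mu^*(K_{\tilde X}+\tilde B)$ has coefficient $1-C\cdot B=-1$ on $C$. Effectivity comes instead from the key identity, which gives $N=G+\Bk(B)$ with $G\ge 0$. Note that $\mu_*N=\Bk(\tilde B)$, so negative definiteness of $\Supp N$ is automatic. With these repairs, your Zariski-uniqueness argument is a valid self-contained substitute for the paper's appeal to Tsunoda's theorem.
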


\begin{proof}
For the proof of (1), see \cite[Theorem 2.3.11.1 (p.\ 107)]{OAS}, which is the same as \cite[Theorem 1.11]{MT84}. 
By \cite[Theorem 1.3]{Tsunoda83}, we can take $\mu$ such that  $\mu^*(K_{\tilde{X}} + \tilde{B}^{\#}) = (K_X+B)^{+}$ provided $\lkd{X-B} \geq 0$. This proves (2).
\end{proof}

\begin{lem}
Let $(X,B)$ be an almost minimal SNC-pair. Then the following assertions hold true.
\begin{itemize}
\item[(1)]
$\lkd{X-B} \geq 0$ if and only if $K_X+ B^{\#}$ is nef.
\item[(2)]
If $\lkd{X-B} \geq 0$, then  $K_X+ B^{\#}$ is semiample. Moreover, we have the following.
\begin{itemize}
\item[(2-1)]
$\lkd{X-B} = 0$ $\Longleftrightarrow$ $K_X+ B^{\#} \equiv 0$.
\item[(2-2)]
$\lkd{X-B} = 1$ $\Longleftrightarrow$ $(K_X+ B^{\#})^2 = 0$ and $K_X+ B^{\#} \not\equiv 0$.
\item[(2-3)]
$\lkd{X-B} = 2$ $\Longleftrightarrow$ $(K_X+ B^{\#})^2 > 0$.
\end{itemize}
\end{itemize}
\end{lem}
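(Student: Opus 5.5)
The plan is to combine the standard facts about the Zariski decomposition of $K_X+B$ with log abundance for surfaces. The characteristic-free form of the latter is due to Fujita, Kawamata, Tsunoda and Miyanishi. The bridge is the observation that, for an almost minimal pair with $K_X+B^{\#}$ nef, the divisor $K_X+B^{\#}$ is exactly the nef part of the Zariski decomposition of $K_X+B$. Indeed, $B-B^{\#}=\Bk(B)$ is effective and supported on a negative definite union of curves, and condition (b) gives $(K_X+B^{\#})\cdot Z=0$ for every component $Z$ of $\Supp\Bk(B)$. By uniqueness of the Zariski decomposition, $(K_X+B)^+=K_X+B^{\#}$ and $(K_X+B)^-=\Bk(B)$.

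For (1), one direction is immediate. If $K_X+B^{\#}$ is nef, then it is pseudo-effective, so $\kappa\geq 0$ is plausible but must be proved through abundance. Conversely, if $\lkd{X-B}\geq 0$, then $K_X+B$ is pseudo-effective. I would then argue by contradiction: suppose $C$ is an irreducible curve with $(K_X+B^{\#})\cdot C<0$. Almost minimality says that the intersection matrix of $C+\Bk(B)$ is not negative definite. Since $K_X+B^{\#}$ is $K_X+B$ minus a negative definite part, the standard argument of \cite[Chapter 2]{OAS} then shows that $K_X+B$ is not pseudo-effective, contradicting $\lkd{X-B}\geq 0$. In practice I would cite the corresponding statement in \cite[2.3]{OAS} or \cite[Chapter 1]{MT84}, which is proved exactly this way.

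For (2), I would invoke the log abundance theorem for surfaces: for an SNC-pair with $K_X+B^{\#}$ nef, some multiple $n(K_X+B^{\#})$ with $n(K_X+B^{\#})$ integral is base point free. In characteristic zero this is Kawamata's theorem. In arbitrary characteristic I would cite Tsunoda's result \cite{Tsunoda83} together with \cite{OAS} (see also \cite{K17}). This is the main obstacle, since it is the deep input; I would quote it rather than reprove it.

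Granting semiampleness, the subcases follow formally. The nef part carries all sections, so $\ol{P}_m(X-B)=h^0(\lfloor m(K_X+B^{\#})\rfloor)$, and hence $\lkd{X-B}=\kappa(X,K_X+B^{\#})$. For a semiample divisor $L$ this equals the dimension of the image of $\Phi_{|nL|}$ for a suitable $n$. That dimension is $0$ exactly when $L\equiv 0$. It is $2$ exactly when $L^2>0$, since a morphism to a surface pulls back an ample class with positive self-intersection and conversely. It is $1$ exactly when $L^2=0$ and $L\not\equiv 0$, because then $L$ is the pullback of an ample divisor from a curve. Together these give (2-1), (2-2) and (2-3).
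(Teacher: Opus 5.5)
Your proposal is correct, but it does more than the paper, which gives no argument and simply cites \cite[Lemma 1.4]{K13}. You unpack that citation into four ingredients:
\begin{itemize}
\item the identification $(K_X+B)^+=K_X+B^{\#}$, $(K_X+B)^-=\Bk(B)$ once $K_X+B^{\#}$ is nef;
\item an elementary peeling argument for ``$\lkd{X-B}\ge 0 \Rightarrow K_X+B^{\#}$ nef'';
\item log abundance for the lc pair $(X,B^{\#})$ as the single deep input;
\item a formal translation of the Iitaka dimension of a semiample divisor into the numerical conditions (2-1)--(2-3).
\end{itemize}
The paper's citation buys brevity and delegates the characteristic-free work to \cite{K13}. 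Your version shows that everything except abundance is intersection theory.

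Two points need tightening.

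First, the direction ``$K_X+B^{\#}$ nef $\Rightarrow \lkd{X-B}\ge 0$'' is not immediate. It is exactly the non-vanishing part of abundance, so you must invoke abundance for nef $K_X+B^{\#}$ without assuming $\lkd{X-B}\ge 0$, as you in fact phrase it in (2).

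The converse is the elementary direction, but you should state its key step rather than wave at ``the standard argument''. Let $K_X+B=P+N$ be the Zariski decomposition. Then $\Bk(B)\le N$, because $(K_X+B-\Bk(B))\cdot Z=0$ for every component $Z$ of the negative definite support of $\Bk(B)$. To see this, write $\Bk(B)-N=A-A'$ with $A,A'\ge 0$ having no common components. Nefness of $P$ gives $A\cdot A_i\ge 0$ for each component $A_i$ of $A$, so $A^2\ge 0$, which forces $A=0$ by negative definiteness. Hence $K_X+B^{\#}=P+(N-\Bk(B))$ with $N-\Bk(B)\ge 0$. A curve $C$ with $(K_X+B^{\#})\cdot C<0$ must then be a component of $N$, so $C+\Bk(B)$ has negative definite intersection matrix, contradicting almost minimality.

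Second, the paper works over an algebraically closed field of arbitrary characteristic. Make sure the abundance result you quote actually covers positive characteristic, not only characteristic zero.
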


\begin{proof}
See \cite[Lemma 1.4]{K13}.
\end{proof}

Let $E$ be a $(-1)$-curve on $X$. Then $E$ is called a {\em superfluous exceptional component} of $B$ if $E \subset \Supp (\lfloor B^{\#} \rfloor)$, $E \cdot (B-E) = E \cdot (\lfloor B^{\#} \rfloor - E) = 2$ and $E$ meets two irreducible components of $\lfloor B^{\#} \rfloor - E$. Assume that $E$ is a superfluous exceptional component of $B$. Let $\mu: X \to Y$ be the contraction of $E$ and set $B_Y:= \mu_*(B)$. It is then clear that $(Y,B_Y)$ is an SNC-pair and $K_X + B^{\#} \equiv \mu^*(K_Y + B_Y^{\#})$. Further, $\ol{P}_n(X-B) = \ol{P}_n(Y-B_Y)$ for every integer $n \geq 1$. So, when we construct an almost minimal model, we assume that there exist no superfluous exceptional components.

In order to study an SNC-pair $(X,B)$ of $\lkd{X-B} \geq 0$, it is convenient to consider its strongly minimal model. We recall the following lemma. 

\begin{lem}
Let $(X,B)$ be an almost minimal SNC-pair of $\lkd{X- B} \geq 0$. Assume that there exists a $(-1)$-curve $E$ such that $E \cdot (K_X + B^{\#}) = 0$, $E \not\subset \Supp (\lfloor B^{\#} \rfloor)$ and the intersection matrix of $E+ \Bk(B)$ is negative definite. Let $\sigma: X \to Y$ be a composite of the contraction of $E$ and the contractions of all subsequently contractible components of $\Supp (\Bk(B))$. Set $B_Y:= \sigma_*(B)$. Then the following assertions hold.
\begin{itemize}
\item[(1)]
The divisor $B_Y$ is an SNC-divisor and each connected component of $\sigma(\Supp (\Bk(B)))$ is an admissible rational twig, an admissible rational rod or an admissible rational fork of $B_Y$. 

\item[(2)]
The pair $(Y,B_Y)$ is an almost minimal SNC-pair.

\item[(3)]
For every integer $n \geq 1$, $\ol{P}_n(X-B) = \ol{P}_n(Y-B_Y)$. In particular, $\lkd{Y-B_Y} = \lkd{X-B}$.
\end{itemize}
\end{lem}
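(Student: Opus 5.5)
We prove the three assertions in turn, working throughout with the Zariski--Fujita type decomposition $K_X+B^{\#}$ and the peeling formalism recalled above. The starting observation is that, since $(X,B)$ is almost minimal with $\lkd{X-B}\geq 0$, Lemma 2.4 (1) shows that $K_X+B^{\#}$ is nef.

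\textbf{Step 1: the structure of $E+\Bk(B)$.} The hypotheses give $E\cdot(K_X+B^{\#})=0$, and $E$ is a $(-1)$-curve not contained in $\Supp(\lfloor B^{\#}\rfloor)$. From $K_X\cdot E=-1$ we get $E\cdot B^{\#}=1$, so
$$
E\cdot B = E\cdot B^{\#}+E\cdot \Bk(B) \leq 1 + E\cdot\Bk(B).
$$
Because $E+\Bk(B)$ is negative definite, the standard argument (as in \cite[Chapter 2]{OAS} or \cite{MT84}) shows two things. First, $E$ meets $\Supp(B)$ only along bark components, except possibly at one point of a component of $\lfloor B^{\#}\rfloor$. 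Second, $E$ meets each connected component of $\Supp\Bk(B)$ at most once, and meets only components of coefficient $<1$, so that $E\cdot B^{\#}=1$ is realised by the fractional coefficients. Here $E$ is allowed to lie in $B$ (as a non-$\lfloor B^{\#}\rfloor$ component) or outside $B$.

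\textbf{Step 2: tracking the bark through the contractions.} We contract $E$, and then contract successively every component of the image of $\Supp\Bk(B)$ that becomes a $(-1)$-curve. This process terminates because the configuration is negative definite. At each stage we check that the image of $B$ remains SNC. This holds because each contracted curve meets the rest of the boundary in at most two points on distinct components; otherwise we would obtain a non-negative-definite configuration, or the resulting $(-1)$-curve would violate $(K+B^{\#})\cdot C=0$.

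We then verify inductively that, after each contraction $\tau$ of such a $(-1)$-curve $C$ with $(K+B^{\#})\cdot C=0$, we have $\tau^*(K_{Y'}+B_{Y'}^{\#})=K+B^{\#}$. This uses the uniqueness of the bark decomposition in conditions (a) and (b): the pushforward of $\Bk$ satisfies the defining conditions on the image of the twigs, rods and forks. This step also gives (1), since each connected component of the image of $\Supp\Bk(B)$ remains an admissible rational twig, rod or fork. It stays admissible because all $(-1)$-curves in it are contracted, and it stays negative definite as the image of a negative definite configuration.

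\textbf{Step 3: conclusions (2) and (3).} Since $K_Y+B_Y^{\#}$ pulls back to a nef divisor, it is itself nef. Almost minimality of $(Y,B_Y)$ then follows: a curve $C'$ with $(K_Y+B_Y^{\#})\cdot C'<0$ cannot exist.

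For (3), we note that $\sigma^*(K_Y+B_Y)-(K_X+B)$ is supported on exceptional curves. More precisely, $K_X+B=\sigma^*(K_Y+B_Y)-\Delta$, where $\Delta\ge0$ is exceptional, because each contracted curve meets the boundary at most twice. Together with the equality of nef parts from Step 2, this gives $H^0(n(K_X+B))\cong H^0(n(K_Y+B_Y))$. Hence $\ol{P}_n$ is preserved, and in particular so is $\lkd{\,\cdot\,}$.

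\textbf{Main obstacle.} The hardest step is Step 1 combined with the SNC verification in Step 2. The difficulty is to exclude the cases where $E$ meets two bark components of one connected twig or fork, or meets a component of coefficient $1$. We will handle this with a discriminant computation: in each such case we exhibit that $E+\Bk(B)$ fails to be negative definite, or that $E\cdot B^{\#}>1$.
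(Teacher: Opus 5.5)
\textbf{There is a genuine gap.} The paper gives no argument of its own here; it quotes Lemma 2.4.4.1 of Miyanishi's \emph{Open algebraic surfaces}. So your outline has to stand on its own, and its load-bearing step is missing.

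\textbf{The bark identity.} Your (2) and (3) both rest on $\Bk(B_Y)=\sigma_*\Bk(B)$, i.e.\ $B_Y^{\#}=\sigma_*B^{\#}$. The identity $\sigma^*(K_Y+\sigma_*B^{\#})=K_X+B^{\#}$ is purely formal, because every curve contracted by $\sigma$ is $(K+B^{\#})$-trivial. It gives condition (b) for $\sigma_*\Bk(B)$ at no cost. Uniqueness of the bark, however, only applies once you check condition (a): $\Supp\sigma_*\Bk(B)$ must be exactly the union of \emph{all maximal} admissible rational twigs, rods and forks of $B_Y$. That requires three things: each image component has the right shape, twigs stay maximal, and no new twig, rod or fork appears. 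Your sentence that ``the pushforward of $\Bk$ satisfies the defining conditions on the image of the twigs, rods and forks'' skips exactly this.

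The type can in fact change. Let $E$ meet a $(-2)$-curve $T_1$ that is a twig attached to $C_0$, and an isolated $(-4)$-curve $R$ of $B$. Then $E\cdot B^{\#}=\frac12+\frac12=1$ and $E+T_1+R$ is negative definite. The map $\sigma$ contracts $E$ and then $T_1$, and the rod $R$ becomes a $(-2)$-twig on $C_0$. If $\Bk(B_Y)$ were strictly larger than $\sigma_*\Bk(B)$, then $\sigma^*(K_Y+B_Y^{\#})$ would be $K_X+B^{\#}$ minus a nonzero effective divisor. Both your nefness argument for (2) and your comparison of nef parts in (3) would then fail.

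\textbf{Step 1.} Controlling this is what Step 1 is for, but there the text is an assertion rather than an argument, and it is partly wrong.
\begin{itemize}
\item $E$ cannot lie in $B$ at all. Components of $B$ outside $\lfloor B^{\#}\rfloor$ belong to admissible twigs, rods or forks, which contain no $(-1)$-curves.
\item The phrase ``meets only components of coefficient $<1$'' contradicts the sentence just before it.
\item Transversality, at most one contact per connected component of $\Bk(B)$, and the SNC property of each intermediate image are all deferred to an unspecified ``discriminant computation''.
\end{itemize}
What is needed is a real case analysis that plays $E\cdot B^{\#}=1$ against the criterion
$d(E+A_1+A_2)=d(A_1)d(A_2)\bigl(1-\frac{d(A_1-Z_1)}{d(A_1)}-\frac{d(A_2-Z_2)}{d(A_2)}\bigr)>0$.
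For instance, for a twig $A$ the ratio $d(A-Z)/d(A)$ is at least the coefficient of $Z$ in $\Bk(B)$. This rules out $E$ meeting two twigs. The shape and maximality of the images are then read off from the resulting list of cases.

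\textbf{Step 3.} Your claim that $\Delta\ge 0$ is false. If $E$ meets $C\subset\lfloor B^{\#}\rfloor$ and an isolated $(-2)$-curve $R\subset B$, then $K_X+B=\sigma^*(K_Y+B_Y)+R$. The equality of $\ol{P}_n$ must instead come from $h^0(n(K+B))=h^0(\lfloor n(K+B^{\#})\rfloor)$ on both sides. That is a Zariski-decomposition statement, and it again presupposes $\Bk(B_Y)=\sigma_*\Bk(B)$.
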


\begin{proof}
All the assertions follow from \cite[(4), (6) and (7) of Lemma 2.4.4.1 (p.\ 123)]{OAS}.
\end{proof}

In Lemma 2.5, we call the pair $(V, D)$ a {\em strongly minimal model} of a given SNC-pair $(X,B)$ of $\lkd{X-B} \geq 0$. 
An SNC-pair $(V,D)$ of $\lkd{V-D} \geq 0$ is said to be {\em strongly minimal} if $(V,D)$ becomes a strongly minimal model of itself.

\section{The case $\ol{\kappa} = -\infty$}

Let $B$ be a reduced curve on an irrational ruled surface $X$. In this section, we consider the case $\lkd{X-B} = -\infty$.

\begin{lem}
Let $X$ be an irrational ruled surface and let $\varphi: X \to T$ be a $\BP^1$-fibration over a smooth projective curve $T$ of genus $h^1(\SO_X)$. Let $B$ be a reduced curve on $X$. Then the following conditions are equivalent to each other.
\begin{itemize}
\item[(1)]
For a fiber $F$ of $\varphi$, $F \cdot B \leq 1$.
\item[(2)]
$\kappa(X, K_X + B) = -\infty$.
\item[(3)]
$\lkd{X-B} = -\infty$. 
\end{itemize}
\end{lem}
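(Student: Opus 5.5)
The plan is to prove the cycle (1) $\Rightarrow$ (2) $\Rightarrow$ (3) $\Rightarrow$ (1).

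\emph{(1) $\Rightarrow$ (2).} Let $F$ be a general fiber of $\varphi$. Then $F \cong \BP^1$ and $F^2=0$, so $K_X \cdot F = -2$. If $F\cdot B \leq 1$, then $(K_X+B)\cdot F \leq -1 < 0$. The fibers $F$ cover $X$, so any effective divisor $D \sim n(K_X+B)$ with $n \geq 1$ would have to satisfy $D\cdot F \geq 0$ for a general fiber $F$ not contained in $\Supp D$. This is a contradiction, so $h^0(X,n(K_X+B))=0$ for all $n\geq 1$.

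\emph{(2) $\Rightarrow$ (3).} Take an embedded resolution $\mu: \tilde X \to X$ of $B$ such that $\tilde B := \mu^{-1}(B)_{\red}$ is an SNC-divisor. Then $\lkd{X-B} = \kappa(\tilde X, K_{\tilde X}+\tilde B)$. We have $\mu_*(K_{\tilde X}+\tilde B) = K_X + B$, and $\mu_*$ sends effective divisors in $|n(K_{\tilde X}+\tilde B)|$ to effective divisors in $|n(K_X+B)|$. Hence $\lkd{X-B} \leq \kappa(X,K_X+B) = -\infty$. This is the inequality already noted in the introduction.

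\emph{(3) $\Rightarrow$ (1).} This is the substantive direction. We argue by contraposition: assume $F\cdot B \geq 2$ for a fiber $F$, and show $\lkd{X-B} \geq 0$. We pass to the SNC model $(\tilde X,\tilde B)$ as above. Since $\tilde X$ is still ruled over $T$ with $g(T) = h^1(\SO_X) \geq 1$, and the general fiber meets $\tilde B$ in at least two points counted with multiplicity, the horizontal part of $\tilde B$ has total degree $\geq 2$ over $T$.

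The first approach is to invoke Miyanishi's results in \cite{M82}. By the structure theorem for open surfaces with $\ol{\kappa}=-\infty$, a smooth open surface $S = \tilde X - \tilde B$ with $\lkd{S}=-\infty$ either contains an $\mathbb{A}^1$-cylinder or is affine-ruled-like. In the irrational case this means that $S$ contains a cylinder $U\times \mathbb{A}^1$, with $U$ an open subset of a curve. That curve must dominate $T$, because $\tilde X$ admits no rational curves other than those in fibers of the ruling, as $T$ is irrational. So the $\mathbb{A}^1$-fibration is the restriction of $\varphi$, and the general fiber of $\varphi$ meets $B$ in exactly one point, giving $F\cdot B \leq 1$.

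If \cite{M82} does not apply directly in arbitrary characteristic, the fallback is to work with an almost minimal model via Lemmas 2.3 and 2.4. By Lemma 2.4, $\lkd{=}-\infty$ means $K+\tilde B^{\#}$ is not nef on the almost minimal model. The peeling and contraction process then ends with a fibration whose general fiber $C$ satisfies $(K+\tilde B^{\#})\cdot C<0$ and $C^2 = 0$. Such a $C$ is a $\BP^1$, hence a fiber of the ruling over $T$. We then have $-2 + \tilde B^{\#}\cdot C < 0$ with the horizontal part of $\tilde B^{\#}$ equal to that of $\tilde B$ (bark is vertical or supported on rational curves), which forces the horizontal degree to be at most one.

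The main obstacle is this last step: making precise that the $\mathbb{A}^1$-ruling, or the extremal fibration, produced by the $\ol{\kappa}=-\infty$ theory coincides with $\varphi$. This rests on the fact that every rational curve on an irrational ruled surface lies in a fiber of $\varphi$. We also need to check that the horizontal components survive the resolution and the contractions unchanged.
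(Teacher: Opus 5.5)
\textbf{There is a genuine gap in (3)$\Rightarrow$(1).} Your arguments for (1)$\Rightarrow$(2) and (2)$\Rightarrow$(3) are the paper's and are fine. The problem is (3)$\Rightarrow$(1), which is the real content of the lemma, and neither of your two routes proves it.

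\emph{The cylinder route.} Your ``structure theorem'' is not stated precisely. In this setting (note $\tilde X-\tilde B=X-B$), the claim that $X-B$ contains an $\mathbb{A}^1$-cylinder when $\lkd{X-B}=-\infty$ is equivalent to the implication you are trying to prove. The paper obtains it from [M82, Lemmas 1.1 and 1.5].

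Your final inference is also false in characteristic $p>0$. Knowing that the general fiber meets $B$ in exactly one point does not give $F\cdot B\le 1$, because a purely inseparable multisection meets every fiber set-theoretically once. The Frobenius pair of Lemma 4.1 (2) and Remark 4.2 shows this: $D_0$ is a purely inseparable $2$-section, every general $F-D_0$ is an $\mathbb{A}^1$, yet $F\cdot D_0=2$ and $\lkd{V-D_0}=0$. Excluding exactly this case is why the paper needs [M82, Lemma 1.5] after [M82, Lemma 1.1].

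If you do have a cylinder $U\times\mathbb{A}^1\subset X-B$, you must use it at the generic point. The $\mathbb{A}^1$'s are vertical, so $U\to T$ is defined, and it is birational because $k(T)$ is algebraically closed in $k(X)$. Then $\mathbb{A}^1_{k(T)}$ is open in $\BP^1_{k(T)}$, and its complement is a single $k(T)$-rational point (compare units and Picard groups). Hence the horizontal part of $B$ is a section.

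\emph{The fallback.} On the almost minimal model $(V,D)$, Lemma 2.4 only tells you that $K_V+D^{\#}$ is not nef. The claim that the process ``ends with a fibration whose general fiber $C$ has $(K+D^{\#})\cdot C<0$'' is asserted, not derived, and a priori a $(K+D^{\#})$-negative curve could be horizontal. What is missing is a \emph{rational} curve $C$ with $(K_V+D^{\#})\cdot C<0$. You can get one from the log cone theorem for surfaces (negative extremal rays are spanned by rational curves), or from Miyanishi's structure theorem for almost minimal pairs with $K+D^{\#}$ not nef.

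Once you have such a $C$, no fibration is needed, since $C$ is vertical:
\begin{itemize}
\item If $C$ is a whole fiber, then $-2+F\cdot D^{\#}<0$.
\item If $C$ is a proper component of a fiber $F_0$, almost minimality (the matrix of $C+\Bk(D)$ is not negative definite) together with Zariski's lemma forces every other component of $F_0$ into $\Supp\Bk(D)$. Those components satisfy $(K_V+D^{\#})\cdot C_i=0$, so $(K_V+D^{\#})\cdot F_0=m_C\,(K_V+D^{\#})\cdot C<0$.
\end{itemize}
Either way $F\cdot D^{\#}<2$. Horizontal components are irrational, so they have coefficient $1$ in $D^{\#}$, while the bark is vertical. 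All curves blown up or contracted are rational, hence vertical, so $F\cdot B=F\cdot D^{\#}\le 1$.

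Completed this way, your fallback is a genuinely different route from the paper's citation of [M82]. Because it works with intersection numbers on the almost minimal model, the inseparable case causes no trouble. The step you single out as the main obstacle, identifying the fibration with $\varphi$, is actually the easy part. The real issues are producing the negative rational curve and handling inseparability.
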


\begin{proof}
In general, $\lkd{X-B} \leq \kappa(X, K_X + B)$. So, (2) implies (3). 
If (1) is true, then $F \cdot (K_X + B) = -2 + F \cdot B \leq -1$ for a fiber $F$ of $\varphi$. So $h^0(X, m(K_X+B)) = 0$ for any positive integer $m$. This proves the part  “(1) $\Longrightarrow$ (2)”.

We prove the part “(3) $\Longrightarrow$ (1)”. Assume that $\lkd{X-B} = -\infty$ and let $F$ be a fiber of $\varphi$. 
Set $s:= F \cdot B$. We may assume that $s \geq 1$. Let $f: V \to X$ be a composite of blowing-ups over points on $B$ such that $D:= f^*(B)_{\red}$ becomes an SNC-divisor. Then $\lkd{X-B} = \lkd{V-D} = \kappa(V, K_V + D)$. The map $\tilde{\varphi} := \varphi \circ f$ is  a $\BP^1$-fibration over $T$. Since every exceptional curve with respect to $f$ is a fiber component of $\tilde{\varphi}$, $\tilde{F} \cdot D = F \cdot B = s$ for a fiber $\tilde{F}$ of $\tilde{\varphi}$. Note that $|K_V + D| = \emptyset$ since $\lkd{V-D} = \kappa(V, K_V + D) = -\infty$. We infer from \cite[Lemma 1.1]{M82} that $D$ contains a unique horizontal component, say $D_1$. Further, by \cite[Lemma 1.5]{M82}, $D_1$ is a section of $\tilde{\varphi}$. Therefore, $s = 1$. 
\end{proof}

Assume that $\lkd{X-B} = -\infty$. Then we know that $B$ is an SNC-divisor. In fact, if $B \cdot F = 0$, then $B$ is contained in fibers of $\varphi$ and so $B$ is an SNC-divisor because the reduced structure of every fiber of $\varphi$ is an SNC-divisor. If $B \cdot F > 0$, then by Lemma 3.1, $B \cdot F = 1$. So $B$ consists of a section of $\varphi$ and fiber components of $\varphi$. Therefore, $B$ is an SNC-divisor. 

Let $g : X \to \BP_T(\SE)$ be a relatively minimal model of the $\BP^1$-fibration $\varphi: X \to T$, here $\BP_T(\SE)$ is a $\BP^1$-bundle over $T$ with a rank two vector bundle $\SE$ on $T$, and let $\pi$ be the $\BP^1$-fibration on $\BP_T(\SE)$ induced from $\varphi$. 
Let $E_1, \ldots, E_n$ be the exceptional curves for $g$ not contained in $B$. Set $B' = B + E_1 + \cdots + E_n$.
Then $\lkd{X-B'} = \lkd{X-B} = -\infty$.
If $g_*(B) = 0$, then $g_*(B') = 0$. 
If $B \cdot F = 0$ (resp.\ $B \cdot F = 1$) for a fiber $F$ of $\varphi$ and $g_*(B) \not= 0$,  then $g_*(B')$ consists of $\ell$ fibers of $\pi$ (resp.\  $g_*(B')$ consists of a section and $\ell$ fibers of $\pi$).
Thus, the assertions (2) and (3) of Theorem 1.1 are verified when $\lkd{X-B} = -\infty$.

\section{The case $\ol{\kappa} = 0$}

In this section, we study the case $\ol{\kappa} = 0$ and prove (2) and (3) of Theorem 1.1 in this case. 

Let $X$ be an irrational ruled surface and let $B$ be a reduced curve on $X$ such that $\lkd{X-B} = 0$. Let $\mu: \tilde{X} \to X$ be a composite of blowing-ups over points on $B$ such that $\tilde{B}:= \mu^*(B)_{\red}$ becomes an SNC-divisor. Here we may assume that $\mu$ is the shortest, i.e., the Picard number of $\tilde{X}$ is least possible. As seen from \S 2, we can construct a strongly minimal model of the SNC-pair $(\tilde{X}, \tilde{B})$. Namely, there exists a birational morphism $f: \tilde{X} \to V$ onto a smooth projective surface $V$ such that $(V, D)$, where $D = f_*(\tilde{B})$, is a strongly minimal model of $(\tilde{X}, \tilde{B})$. 
Since $X$ is irrational ruled, there exists a $\BP^1$-fibration $\varphi: X \to T$ onto a smooth projective curve $T$ of genus $h^1(\SO_X)$. Then, $\varphi$ gives a $\BP^1$-fibration from $V$ onto $T$, which we denote by $\pi$. 

Since $(V,D)$ is strongly minimal and $\lkd{V-D} = \lkd{\tilde{X} - \tilde{B}} = \lkd{X - B} = 0$, the pair $(V,D)$ is one of those in \cite[Theorem 2.1 (2)]{K17}. Namely, we have the following result.

\begin{lem}
With the same notations and assumptions as above, $V$ is an elliptic ruled surface and the following assertions hold true.
\begin{itemize}
\item[(1)]
If $V$ is relatively minimal, then either {\rm (a)} $K_V + D \sim 0$, $V = \BP_ T(\SO_T \oplus \SL)$, where $\SL \in \Pic (T)$, and $D=D_1+D_2$ is a sum of two disjoint sections $D_1$ and $D_2$ of $\pi$, or {\rm (b)} $D$ is an elliptic curve with $D \equiv -K_V$ and $V = \BP_T(\SE)$, where $\SE$ is an indecomposable vector bundle of rank two over $T$.

\item[(2)]
If $V$ is not relatively minimal, then ${\rm char} (k) = 2$ and the pair $(V,D)$ is one of the pairs constructed in \cite[Example 2.2]{K17}. In particular, the following conditions are satisfied{\rm :}
\begin{itemize}
\item[(2-1)]
Let $F_1, \ldots, F_r$ be all the singular fibers of $\pi$. Then every $F_i$ can be expressed as 
$$
F_i = 2(E_i+D^i_1+\cdots+D^i_{s_i-2})+D^i_{s_i-1}+D^i_{s_i} 
$$
$$
(s_i:= -1 + \# (F_i)_{\red} \geq 2),
$$
where $E_i$ is the unique $(-1)$-curve in $F_i$ and $D^i_j$ $(j = 1, \ldots, s_i)$ is a $(-2)$-curve and the weighted dual graph of $F_i$ looks like that of Figure {\rm 1}.
\item[(2-2)]
$D$ has a unique irreducible component $D_0$ that is a horizontal component of $\pi$. Further, $D_0$ is a $2$-section of $\pi$, $\pi|_{D_0}: D_0 \to T$ is a purely inseparable double covering, and $D_0 \cdot F_i = 2 D_0 \cdot E_i = 2$ for $i = 1, \ldots, r$. 
\item[(2-3)]
$\displaystyle D = D_0 + \sum_{i=1}^r \left(\sum_{j = 1}^{s_i} D^i_j \right)$. 
\end{itemize}
\end{itemize}
\end{lem}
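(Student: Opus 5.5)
Lemma 4.1 is in essence a restatement of \cite[Theorem 2.1 (2)]{K17}, specialised to our situation, so the plan is to check that $(V,D)$ satisfies the hypotheses of that theorem and then read off its conclusions.

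\textbf{Hypotheses and ellipticity of $V$.} By Lemma 2.5, $(V,D)$ is a strongly minimal SNC-pair with $\lkd{V-D}=0$. The surface $V$ is irrational ruled, since it is birational to $X$ and the $\BP^1$-fibration $\pi: V \to T$ has base of genus $h^1(\SO_X) \geq 1$. Hence \cite[Theorem 2.1 (2)]{K17} applies. To see that $h^1(\SO_V)=1$, I would argue directly. By Lemmas 2.3 and 2.4 (2-1), $K_V + D^{\#} \equiv 0$. Let $F$ be a general fiber of $\pi$. Then $F \cdot D^{\#} = 2$, so $D$ has horizontal part of total degree at least $2$. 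Apply adjunction to the horizontal components. Since $D^{\#} \leq D$ and $K_V+D^{\#}\equiv 0$, each horizontal component $C$ satisfies
$$2p_a(C)-2 = (K_V+C)\cdot C \leq (K_V + D^{\#})\cdot C + (\text{bark corrections}) = 0.$$
So $C$ has arithmetic genus at most $1$. But $C$ dominates $T$ and $g(T)\geq 1$, so $g(T)=1$. Therefore $V$ is elliptic ruled.

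\textbf{Relatively minimal case.} Suppose $V$ is a $\BP^1$-bundle. The horizontal part of $D$ is either two sections or one $2$-section, and each has genus $1$. If it is two sections $D_1, D_2$, then $K_V + D_1 + D_2 \equiv 0$ combined with adjunction gives $D_1 \cdot D_2 = 0$. Vertical components of $D$ are fibers; a fiber component $F_0$ would satisfy $F_0 \cdot (K_V + D^{\#}) = 0$ only if it lay in the bark, but a fiber has self-intersection $0$ and so is not admissible. Hence $D = D_1 + D_2$, and the disjoint sections split $\SE$, giving $V = \BP_T(\SO_T \oplus \SL)$. Moreover $K_V + D \sim 0$, by the standard computation on a decomposable bundle. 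If instead the horizontal part is a single $2$-section $D_0$ of genus $1$, then $D_0 \equiv -K_V$. When $\SE$ is decomposable, one rules this out or reduces it to case (a); this is exactly what K17 does.

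\textbf{Non-relatively-minimal case.} This is the main obstacle. A $(-1)$-curve $E$ in a fiber must satisfy $E \cdot (K_V + D^{\#}) = 0$. Strong minimality then forces $E + \Bk(D)$ to be non-negative definite, which pins down the fiber structure. Showing that this configuration forces $\ch(k)=2$ and a purely inseparable $2$-section, as in \cite[Example 2.2]{K17}, is the delicate part. In characteristic $\neq 2$, a separable $2$-section of genus $1$ over an elliptic curve is étale, and this contradicts the double multiplicity of $E_i$ in $F_i$. I would simply cite \cite[Theorem 2.1 (2) and Example 2.2]{K17} for items (2-1)--(2-3).
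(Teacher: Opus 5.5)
Your reduction is the paper's own. Its entire proof of Lemma 4.1 is the citation of \cite[Theorem 2.1]{K17}, and your check that $(V,D)$ is a strongly minimal SNC-pair with $\lkd{V-D}=0$ on an irrational ruled surface is exactly what is needed to invoke it. The problems lie in the self-contained arguments you add around the citation.

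In the relatively minimal case, your exclusion of fibers from $D$ is vacuous. Since $K_V+D^{\#}\equiv 0$, every curve satisfies $F_0\cdot(K_V+D^{\#})=0$, whether or not it lies in the bark. You also used $D^{\#}=D_1+D_2$ before establishing it. The argument that works is this. A fiber $F_0\subset D$ has $F_0^2=0$, so it is not in the bark and has coefficient $1$ in $D^{\#}$. Each horizontal component $C$ dominates $T$, hence is irrational, so it also has coefficient $1$. That coefficient, rather than a vanishing of ``bark corrections'', is what gives your bound $(K_V+C)\cdot C=-(D^{\#}-C)\cdot C\le 0$. Now intersect $K_V+D^{\#}\equiv 0$ with a horizontal component of arithmetic genus $1$. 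This yields $D_1\cdot D_2+n=0$, resp.\ $2n=0$, where $n$ is the number of fibers contained in $D$.

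More seriously, the step ``when $\SE$ is decomposable, one rules this out or reduces it to case (a)'' cannot be carried out, because such pairs exist. Let $\ch(k)\neq 2$, let $a$ be a nonzero $2$-torsion point of an elliptic curve $\tilde T$, and let $q:\tilde T\times\BP^1\to V$ be the quotient by the free involution $\iota(x,t)=(x+a,-t)$.
\begin{itemize}
\item $V\to T:=\tilde T/\langle a\rangle$ is a $\BP^1$-bundle.
\item The images of $\tilde T\times\{0\}$ and $\tilde T\times\{\infty\}$ are disjoint sections, so $V\cong\BP_T(\SO_T\oplus\SL)$ with $\SL$ of order $2$.
\item The image $D_0$ of $\tilde T\times\{1,-1\}$ is an irreducible elliptic $2$-section, \'etale over $T$.
\item $q^*(K_V+D_0)\sim 0$, so $\lkd{V-D_0}=0$ and $D_0\equiv -K_V$.
\end{itemize}
Since $V$ contains no $(-1)$-curves, $(V,D_0)$ is strongly minimal. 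With $(X,B)=(V,D_0)$, it is neither (a) nor (b) as stated. So indecomposability in (1)(b) cannot be derived. As written, that clause is too strong and must also allow $\SE\cong\SO_T\oplus\SL$ with $\SL^{\otimes 2}\cong\SO_T$; nothing later in the paper uses it.

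Likewise, your characteristic argument in part (2) presupposes the fiber shape (2-1) that it is meant to support. So part (2) rests entirely on \cite[Theorem 2.1 and Example 2.2]{K17}, exactly as in the paper.
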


\raisebox{-25mm}{
\setlength{\unitlength}{1mm}
\begin{picture}(128,30)(0,0)
\put(57,3){Figure 1}
\put(35,15){\circle{1.8}}
\put(33,18){$E_i$}
\put(33,10){$-1$}
\put(36,15){\line(1,0){13}}
\put(50,15){\circle{1.8}}
\put(48,18){$D^i_1$}
\put(48,10){$-2$}
\put(51,15){\line(1,0){4}}
\multiput(56,15)(2,0){15}{\circle*{0.2}}
\put(85,15){\line(1,0){4}}
\put(90,15){\circle{1.8}}
\put(85,19){$D^i_{s_i-2}$}
\put(88,10){$-2$}
\put(91,15){\line(3,1){14}}
\put(91,15){\line(3,-1){14}}
\put(106,20){\circle{1.8}}
\put(104,22){$-2$}
\put(110,19){$D^i_{s_i-1}$}
\put(106,10){\circle{1.8}}
\put(110,8){$D^i_{s_i}$}
\put(104,5){$-2$}
\end{picture}}

\begin{proof}
See \cite[Theorem 2.1]{K17}.
\end{proof}

\begin{remark}
In {\rm (2)} of Lemma {\rm 4.1}, since each connected component of $D-D_0$ is either a $(-2)$-rod or a $(-2)$-fork, $D^{\#} = D_0$ and so $\lkd{V-D_0} = 0$. Let $g: V \to \BP_T(\SE)$ be a relatively minimal model of $\pi: V \to T$, where $\SE$ is a vector bundle of rank two on $T$. Then $g_*(D_0)$ is smooth and the pair $(\BP_T(\SE), g_*(D_0))$ is a Frobenius pair over $T$, which is given in \cite[2.1 and 2.2]{M82}. 
\end{remark}

By (1) of Lemma 4.1, $X$ is an elliptic ruled surface. 

\begin{prop}
With the same notations and assumptions as above, $B$ is an SNC-divisor and so $\mu = \id_X$. 
\end{prop}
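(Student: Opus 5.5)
Assume that $B$ is not an SNC-divisor, so that $\mu \neq \id_X$, and derive a contradiction. The idea is to compare the configuration of $\tilde{B}$, and its image $D$ on the strongly minimal model $V$, with the explicit list in Lemma 4.1.

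First fix the local picture. Since $\mu$ is the shortest resolution, every $\mu$-exceptional curve lies in $\tilde{B}$. The last $(-1)$-curve $E$ produced by $\mu$ is a component of $\tilde{B}$, and minimality gives $E\cdot(\tilde{B}-E)\geq 3$, or $E$ meets $\tilde{B}-E$ non-transversally in the tangency case. Moreover, every $\mu$-exceptional component is a fiber component of the induced $\BP^1$-fibration $\tilde{\varphi}=\varphi\circ\mu$.

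Second, compare horizontal parts. The contraction $f:\tilde{X}\to V$ only contracts $(-1)$-curves with $(K+\tilde B^{\#})\cdot E=0$ that are not in $\lfloor \tilde{B}^{\#}\rfloor$, together with bark components. These are all curves that do not dominate $T$, since a horizontal curve cannot be contracted compatibly with the fibration $\pi$. Hence the horizontal components of $D$ correspond bijectively to those of $B$, with the same degree over $T$. By Lemma 4.1 there are two cases:
\begin{itemize}
\item[(A)] the horizontal part of $D$ is two disjoint sections, or a smooth elliptic curve $D\equiv -K_V$, and $V$ is relatively minimal;
\item[(B)] the horizontal part is a purely inseparable $2$-section $D_0$ and $D-D_0$ consists of $(-2)$-rods and $(-2)$-forks inside singular fibers.
\end{itemize}
In every case $D$ is an SNC-divisor, and the horizontal part is smooth and meets fibers in at most two points.

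Third, use the invariance of $\ol{P}_n$ and the bark structure to see that $f$ must have contracted the non-SNC configuration created by $\mu$. The vertical components of $\tilde{B}$ over a singular point $p$ of $B$ form a fiber subtree containing $E$. I would then show that $E$, or the chain through $E$, is \emph{not} contractible by $f$. If $E$ meets $\tilde{B}-E$ in at least three points, or is tangent to it, then $E\subset \lfloor\tilde{B}^{\#}\rfloor$, because a component with three or more branch points cannot lie in a twig, rod or fork. So $E$ survives in $D$ as a vertical component of $D$. In case (A) this already contradicts Lemma 4.1, since $D$ has no vertical components there. In case (B) the only vertical components of $D$ are $(-2)$-curves in a $(-2)$-rod or $(-2)$-fork, which are bark components and hence not in $\lfloor D^{\#}\rfloor$. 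The image of $E$ would be such a component, and a $(-1)$-curve lying in $\lfloor \tilde B^{\#}\rfloor$ cannot map onto a bark component. This gives the contradiction.

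The main obstacle is to make this last step rigorous. One has to track how $\tilde{B}^{\#}$ behaves under $f$, using Lemma 2.5 (2), and exclude the possibility that contractions of other curves in $\tilde{X}$ reduce the boundary multiplicity of $E$'s image. A more robust alternative I would try first: singularities of $B$ come only from horizontal components or their intersections with fibers. Since the horizontal part of $D$ is smooth and meets fibers in at most two points, I would check directly on $X$ that the horizontal components of $B$ are smooth and transverse to the fiber components in $B$. Any failure, such as a cusp or a tangency with a fiber, would produce through $\mu$ a vertical $\lfloor\tilde{B}^{\#}\rfloor$-component meeting the horizontal part with multiplicity at least three, contradicting Lemma 4.1.
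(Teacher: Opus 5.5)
\textbf{Verdict: the strategy is the paper's, but the key step is not proved.} Your plan is to take the last exceptional curve $E$ of the minimal $\mu$, show that $f$ does not contract it, and note that $f_*(E)$ is a vertical rational component of $D$ which Lemma 4.1 forbids. That is the paper's argument. Your uniform treatment of (a) and (b), via ``$D$ has no vertical components'', is slicker than the paper's Case 2, which proves smoothness of $B_1$ separately using the unramified double cover. However, the pivotal step $f_*(E)\neq 0$ is not established:
\begin{itemize}
\item The local picture is misstated. $\tilde B$ is SNC, so $E$ cannot be tangent to anything. Minimality of $\mu$ gives either $E\cdot(\tilde B-E)\ge 3$, or $E$ meets a single component of $\tilde B-E$ in two distinct points (the node case).
\item Your reason for $E\subset\lfloor\tilde B^{\#}\rfloor$ is wrong. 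A rational fork does contain a component with three neighbours, and in the node case $E$ has only two intersection points. The correct reason is simply that $E$ is a $(-1)$-curve, and admissible twigs, rods and forks contain none.
\item Lying in $\lfloor\tilde B^{\#}\rfloor$ does not by itself prevent contraction by $f$. For example, superfluous exceptional components lie in $\lfloor B^{\#}\rfloor$ and are contracted in the construction. You leave this as ``the main obstacle'', so the argument is incomplete exactly where it matters.
\end{itemize}

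\textbf{The missing idea: use that $D$ is SNC.} No bookkeeping of $\tilde B^{\#}$ is needed.
\begin{itemize}
\item Suppose $f$ contracted the $(-1)$-curve $E$. Then $f=f'\circ\sigma$, where $\sigma:\tilde X\to X'$ contracts $E$.
\item By minimality of $\mu$, $\sigma_*(\tilde B)$ is not SNC.
\item On the other hand, $\sigma_*(\tilde B)$ is the proper transform of $D$ under $f'$ plus some $f'$-exceptional curves. Blowing up a point preserves the property that the proper transform of an SNC divisor together with any set of exceptional curves is SNC. So $\sigma_*(\tilde B)$ is SNC because $D$ is, a contradiction.
\end{itemize}
This is precisely the paper's one-line step ``since $D$ is SNC and $\tilde E\cdot(\tilde B-\tilde E)\ge 3$, $f_*(\tilde E)\neq 0$''.

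\textbf{Case (B) also needs a reason.} Your phrase ``a $(-1)$-curve cannot map onto a bark component'' should be justified as follows: $f_*(E)^2\ge E^2=-1$, whereas every vertical component $D^i_j$ of $D$ is a $(-2)$-curve. The paper argues differently: $f_*(\tilde E)$ would meet $D_0$, which is a connected component of $D$.

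With these repairs your argument is complete. The ``more robust alternative'' in your last paragraph is unnecessary.
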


\begin{proof}
Let $(V,D)$ be the pair in Lemma 4.1. We consider the following cases separately. 
\medskip

\noindent
\textbf{Case 1:}  $(V,D)$ is the pair (a) in (1) of Lemma 4.1.
We use the notations in (1) of  Lemma 4.1. Let $B_i$ ($i=1,2$) be the proper transform of $D_i$ on $X$. Then $B_i$ ($i=1,2$) is a section of $\varphi$ and hence is smooth. By the constructions of $\mu$ and $f$, we see that every irreducible component of $B-(B_1+B_2)$ (if there exists) is a fiber component of $\varphi$.

Suppose to the contrary that $B$ is not an SNC-divisor. Since every irreducible component of $B$ is smooth, we know that one of the following two cases takes place.
\begin{itemize}
\item[(i)]
There exist a point $P \in B$ and an irreducible component $B_3$ of $B-(B_1+B_2)$ such that $P \in B_1 \cap B_2 \cap B_3$.
\item[(ii)]
There exists a point $P \in B_1 \cap B_2$ such that the local intersection number of $B_1$ and $B_2$ at $P$ $\geq 2$. 
\end{itemize} 
By the minimality of the Picard number $\rho(\tilde{X})$, we know that the last exceptional curve $\tilde{E}$ in the process of blowing-ups over the point $P$ satisfies the condition $\tilde{E} \cdot (\tilde{B} - \tilde{E}) \geq 3$. Since $f_*(\tilde{B}) = D$ is an SNC-divisor and $\tilde{E} \cdot (\tilde{D} - \tilde{E}) \geq 3$, we have $f_*(\tilde{E}) \not= 0$. So $f_*(\tilde{E})$ is an irreducible component of $D$ and is a rational curve. This is a contradiction. 
\medskip

\noindent
\textbf{Case 2:} $(V,D)$ is the pair (b) in (1) of Lemma 4.1 and that $\varphi|_{D}: D \to T$ is separable.
By the assumption and since $D$ and $T$ are elliptic curves, $\varphi|_{D}: D \to T$ is an unramified double covering. Let $\tilde{D}_1$ be the proper transform of $D$ on $\tilde{X}$, and $B_1 = \mu_*(\tilde{D}_1)$. Then $B_1$ is an irreducible component of $B$ and every irreducible component of $B-B_1$ is a fiber component of $\varphi$. In particular, $B-B_1$ is an SNC-divisor. Furthermore, $\varphi|_{B_1}: B_1 \to T$ is a separable morphism of degree two. 

We prove that $B_1$ is smooth. Suppose to the contrary that $B_1$ has a singular point $Q$. Let $F_Q$ be the fiber of $\varphi$ passing through $Q$. Then $F_Q \cdot B_1 = 2$ and so $\mult_Q(B_1) = 2$. Furthermore, $Q$ is not a unibranch singular point because every separable double covering between two elliptic curves is unramified. 
Since $\mu: \tilde{X} \to X$ is a sequence of blowing-ups over points on $B$ including $Q$, $\tilde{D}-\tilde{D}_1$ contains connected curve $\tilde{D}^{(0)}$ such that $\tilde{D}^{(0)} \cdot \tilde{D}_1 = 2$. Since $D = f_*(\tilde{D}) = f_*(\tilde{D}_1)$, $\tilde{D}^{(0)}$ is contracted to a point by $f$. Then $D$ is not smooth, which is a contradiction. This proves that $B_1$ is smooth. 

Since $B-B_1$ consists only of fiber components of $\varphi$ and $\varphi|_{B_1}: B_1 \to T$ is an unramified  double covering, we conclude that $B$ is an SNC-divisor. 
\medskip

\noindent
\textbf{Case 3:} Either $(V,D)$ is the pair (b) in  (1)  of Lemma 4.1 and $\varphi|_{D}: D \to T$ is not separable or $V$ is not relatively minimal.  Note that this case occurs only when $\ch(k) = 2$.
By Lemma 4.1, $D$ can be expressed as $D = D_0 + \sum_{i=1}^r (\sum_{j = 1}^{s_i} D^i_j)$, where $D_0$ is a $2$-section of $\pi$ and $\pi|_{D_0}: D_0 \to T$ is a purely inseparable double covering, $r \geq 0$ and $s_i \geq 2$ for $i = 1, \ldots, r$ (if $r > 0$). 
So $D - D_0$ is either the zero divisor or an SNC-divisor and $D_0$ is a connected component of $D$.
Let $B_0$ be the proper transform of $D_0$ on $X$. Then $B - B_0$ consists only of fiber components of $\varphi$. So $B-B_0$ is an SNC-divisor. 

Suppose that $B$ is not an SNC-divisor. By the minimality of $\rho(\tilde{X})$ (the Picard number of $\tilde{X}$), the last exceptional curve, say $\tilde{E}$, in the process of $\varphi$ is an irreducible component of $\tilde{B}$ and $\tilde{E} \cdot (\tilde{B} - \tilde{E}) \geq 2$. Furthermore, if $\tilde{E} \cdot (\tilde{B} - \tilde{E}) = 2$, then $\tilde{E}$ meets only one irreducible component of $\tilde{B}-\tilde{E}$. Since $B - B_0$ is an SNC-divisor, $\tilde{E}$ must meet $\tilde{B}_0:= \mu^{-1}_*(B_0) = f^{-1}_*(D_0)$. Since $\tilde{E}$ is a $(-1)$-curve and  $\tilde{E} \cdot (\tilde{B} - \tilde{E}) \geq 2$, we know that $f_*(\tilde{E}) \not= 0$ and $f_*(\tilde{E})$ meets $D_0$. This is a contradiction by Lemma 4.1.
\end{proof}

We prove (2) and (3) of Theorem 1.1 when $\lkd{X-B} = 0$. In fact, $X$ is then elliptic ruled and hence we do not have to prove (2) of Theorem 1.1. By Proposition 4.3, $B$ is an SNC-divisor. So, $\mu = \id$. 

Suppose that $(V,D)$ is one of  (a) and (b) in (1) of Lemma 4.1. Set $g = f$ and let $E_1, \ldots, E_n$ be the exceptional curves for $g$ not contained in $B$. Set $B' = B + E_1 + \cdots + E_n$. Then $g_*(B') = g_*(B) = C$ is (iv) with $\ell = 0$ or (v) with $\ell = 0$ in (3) of Theorem 1.1 and $\lkd{X-B'} = \lkd{X-B} = 0$. 

Suppose that $V$ is not relatively minimal. Then $\ch (k) = 2$. Let $E_1, \ldots, E_n$ be the exceptional curves for $f$ not contained in $B$. Let $h :V \to \Sigma= \BP_T(\SE)$ be a birational morphism onto a $\BP^1$-bundle $\Sigma$ over $T$. Set $g = h \circ f: X \to \Sigma$ and $B' = B + E_1 + \cdots + E_n$. Since $B' \cdot F = 2$ for a fiber $F$ of $\varphi$, $\kappa(X, K_X + B') \leq 1$. (See Lemma 5.2 for the proof.) Hence, $\lkd{X-B} = \kappa(X, K_X+B) \leq \lkd{X-B'} \leq \kappa(X, K_X+B') \leq 1$. It is clear that $g(B')$ is (v) in (3) of Theorem 1.1.

\section{The case $\ol{\kappa} = 1$}

In this section, we study the case $\ol{\kappa} = 1$. We complete the proof of Theorem 1.1 and prove Theorem 1.2.

\subsection{Preliminary results on open algebraic surfaces of $\ol{\kappa} = 1$}
Let $X$ be an irrational ruled surface and let $B$ be a reduced curve on $X$ such that $\lkd{X-B} = 1$. Let $\mu: \tilde{X} \to X$ be a composite of blowing-ups over points on $B$ such that $\tilde{B} = \mu^*(B)_{\red}$ becomes an SNC-divisor. Here we may assume that $\mu$ is minimal, i.e., the Picard number of $\tilde{X}$ is the least possible. Let $f: \tilde{X} \to V$ be a birational morphism onto a smooth projective surface $V$ such that $(V,D)$, where $D = f_*(\tilde{B})$, is a strongly minimal model of $(\tilde{X}, \tilde{B})$. 

\begin{lem}
Let the notations and assumptions be the same as above. Then, for a sufficiently large integer $n$, the complete linear system $|n(K_V+D^{\#})|$ defines a fibration $\rho: V \to \Delta$ from $V$ onto a smooth projective curve $\Delta$ such that $\rho$ is an elliptic fibration or a $\BP^1$-fibration. Moreover, let $h : V \to W$ be a birational morphism such that $\pi:= \rho \circ h^{-1}$ is a relatively minimal model of the fibration $\rho$ and set $C:= h_*(D^{\#})$. Then the following assertions hold. 
\begin{itemize}
\item[(1)]
Assume that $\pi$ is an elliptic fibration. Then $W$ is an elliptic ruled surface and $\Delta = \BP^1$. 
 Furthermore,  we have{\rm :}
\begin{itemize}
\item[(1-1)]
$C = \sum_{i=1}^r F_i$, where, for $i=1, \ldots, r$, $F_i$ is an elliptic curve with $F_i^2 = 0$ and $m_i F_i$ is a scheme-theoretic fiber of $\pi$ for some integer $m_i \geq 1$. 
\item[(1-2)]
Write $R^1 \pi_* \SO_W = \SL \oplus \ST$, where $\SL$ is a locally free $\SO_{\Delta}$-module and $\ST$ is a torsion $\SO_{\Delta}$-module. Then the divisor $K_W+C$ can be expressed as follows{\rm :}
$$
K_W +C = \pi^*(K_{\Delta}+\delta) + \sum_{r = 1}^s a_{r} E_{r} + \sum_{i=1}^j F_i, \eqno{(5.1)}
$$
\noindent
where $a_{r} E_{r}$ ranges over all multiple fibers of $\pi$ with multiplicity $m_{r}$, $0 \leq a_{r} < m_r$, $a_{r} = m_{r}  -1$ if $m_{r} E_{r}$ is not a wild fiber of $\pi$, and $\delta$ is a divisor on $\Delta$ with $\deg \delta = \chi(\SO_W) + {\rm length} \ST$. 
\end{itemize}
\item[(2)]
Assume that $\pi$ is a $\BP^1$-fibration. Then we have{\rm :}
\begin{itemize}
\item[(2-1)]
We set as $C = H + \sum_{i=1}^j d_i F_i$, where $H$ is the sum of the horizontal components of $C$ and the $F_i$'s are fibers of $\pi$. Then $H$ is an SNC-divisor and consists  of either two sections or an irreducible $2$-section of $\pi$. 
\item[(2-2)]
The divisor $K_W + C$ can be expressed as follows{\rm :}
$$
K_W + C  = \pi^*(K_{\Delta}+\delta) + \sum_{i=1}^j d_i F_i, \eqno{(5.2)}
$$
\noindent
where $\delta$ is a divisor on $\Delta$ such that $\deg \delta$ equals $H_1\cdot H_2$ {\rm (}resp.\ one half of the number of the branch points of $\pi|_H$, $1-g(\Delta)${\rm )} if $H=H_1+H_2$ with sections $H_1$ and $H_2$ {\rm (}resp.\ $H$ is irreducible and $\pi|_H$ is not purely inseparable, $H$ is irreducible and $\pi|_H$ is purely inseparable{\rm )} and 
$$ d_i =\left\{\begin{array}{ll}
                         \frac{1}{2} \left(1-\frac{1}{m_i}\right) & \mbox{if}\ \# (F_i \cap H) = 1, \\
                         1-\frac{1}{m_i} & \mbox{if}\ \# (F_i \cap H) = 2, 
                        \end{array}\right. $$
where $m_i$ is a positive integer or $+\infty$.
\end{itemize}
\end{itemize}
\end{lem}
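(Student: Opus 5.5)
This lemma is essentially the structure theorem for open surfaces of $\ol{\kappa}=1$ \cite{Kaw78}, \cite{LNM857}, in the form valid in arbitrary characteristic \cite[Theorem 2.1]{K13}. I will specialize it to the case where $V$ is an irrational ruled surface.

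\emph{The fibration $\rho$.} Since $(V,D)$ is almost minimal with $\lkd{V-D}=1$, Lemma 2.4 shows that $K_V+D^{\#}$ is semiample, $(K_V+D^{\#})^2=0$ and $K_V+D^{\#}\not\equiv 0$. Hence $|n(K_V+D^{\#})|$ is base point free for suitable large $n$ and its Stein factorization gives a fibration $\rho:V\to\Delta$ with $K_V+D^{\#}\equiv \rho^*(\text{positive }\Q\text{-divisor})$. For a general fiber $G$ we get $(K_V+D^{\#})\cdot G=0$ and $G^2=0$, so $K_V\cdot G=-D^{\#}\cdot G\le 0$. Thus $p_a(G)\le 1$, and $G$ is either $\BP^1$ with $D^{\#}\cdot G=2$, or of arithmetic genus one with $D^{\#}\cdot G=0$.

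\emph{The elliptic case.} If $G$ has arithmetic genus one, then $D^{\#}$ is vertical. Since $V$ is ruled over $T$ with $g(T)\ge 1$ and $G$ is not rational, $G$ must dominate $T$, which forces $g(T)=1$. So $V$, and hence $W$, is elliptic ruled. The base $\Delta$ must be $\BP^1$: the fibres of the ruling are rational and are not contained in fibres of $\rho$, so they map onto $\Delta$, and $\Delta$ is dominated by $\BP^1$. In characteristic $2$ there may be quasi-elliptic fibrations, and this is the step I expect to be most delicate. I would rule them out by using that the reduced fibres are dominated by $T$; alternatively, I would import the argument of \cite{K13}.

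The components of $C=h_*(D^{\#})$ are vertical, and the rational ones are contracted or carry coefficient $<1$. Using the canonical bundle formula for elliptic fibrations (the Bombieri--Mumford form, which covers wild fibers in positive characteristic), $K_W+C$ is numerically a pull-back from $\Delta$. Combined with the fact that $\Bk$ consists of rational twigs, rods and forks, this gives $C=\sum F_i$ with $F_i$ elliptic and $m_iF_i$ a fiber, which is (1-1). Then (5.1) is the canonical bundle formula with the $F_i$ added, which is (1-2).

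\emph{The $\BP^1$ case.} Here $C\cdot G=2$. The horizontal part of $D^{\#}$ has coefficient $1$, because the components of $\Bk$ are rational and cannot dominate an irrational $\Delta$, or else contribute fractional parts that are incompatible with the integrality of fibres. So the horizontal part $H$ consists either of two sections or of one $2$-section, and it is SNC because $D$ is. For (5.2), I would use the adjunction formula on the $\BP^1$-bundle $W$, namely $K_W+H\equiv \pi^*(K_\Delta+\delta)+(\text{vertical})$, and compute $\deg\delta$ via Hurwitz applied to $\pi|_H$, or via $H_1\cdot H_2$ in the two-section case. The vertical part of $h_*(D^{\#})$ comes from the bark over each fibre. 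Computing the coefficients of a fibre whose maximal twigs contract to chains meeting $H$ in one or two points gives the values $d_i=\tfrac12(1-1/m_i)$ or $1-1/m_i$, where $m_i$ is the determinant of the corresponding twig, and $m_i=\infty$ when the whole fibre lies in $D$. These local computations follow \cite[Lemma 2.10ff]{K13}, and I would cite them rather than redo them.
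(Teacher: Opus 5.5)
Your overall route is the paper's: specialize the $\ol{\kappa}=1$ structure theorem \cite[Theorem 2.1]{K13} to an irrational ruled $V$. The construction of $\rho$, the deduction $g(T)=1$, and $\Delta=\BP^1$ are fine. The paper also simply cites \cite[Theorem 2.1 (II)]{K13} for the $\BP^1$ case, so your citation there is acceptable. On quasi-elliptic fibrations: they occur in characteristics $2$ and $3$, not only $2$, and they are excluded easily. A general fiber would be an irreducible cuspidal rational curve with $p_a=1$. It cannot lie in a fiber of the ruling, whose components are smooth with $p_a=0$, and its normalization $\BP^1$ cannot dominate $T$ of genus $\geq 1$.

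\textbf{The gap is in (1-1).} Your reason that $C$ has no rational components does not work. You say the rational ones ``are contracted or carry coefficient $<1$'' and appeal to the bark consisting of rational twigs, rods and forks. But in the general $\ol{\kappa}=1$ structure theorem, $C$ may contain fibers of type $mI_n$ with $n\geq 1$, i.e.\ cycles of rational curves. Such a cycle, as a connected component of $D$, contains no twig, rod or fork. So every component has coefficient $1$ in $D^{\#}$ and satisfies $(K_V+D^{\#})\cdot Z=0$, and the cycle survives to $W$. Neither the canonical bundle formula nor Zariski's lemma rules this out.

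\textbf{The missing idea.} You need the fact the paper states explicitly: on the elliptic ruled surface $W$, every fiber of $\pi$ is $m_iF_i$ with $F_i$ a smooth elliptic curve. This comes from the ruling $\psi\colon W\to T$.
\begin{itemize}
\item Every rational curve on $W$ lies in a fiber of $\psi$.
\item So a (connected) fiber $\Phi$ of $\pi$ with only rational components would lie in a fiber $\Psi$ of $\psi$, giving $0=\Phi\cdot\Psi=G\cdot\Psi>0$, since $\pi\neq\psi$.
\item Hence every fiber of $\pi$ has an irrational component, necessarily a smooth elliptic curve, and by the classification of fibers of relatively minimal elliptic fibrations it is of type $mI_0$.
\end{itemize}
Consequently all rational vertical curves on $V$ are contracted by $h$. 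Since irrational curves never lie in $\Supp \Bk(D)$, you get $C=\sum F_i$ with all coefficients one, which is (1-1). Then (1-2) follows from the canonical bundle formula as you say.

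\textbf{A smaller point.} In the $\BP^1$ case, ``$H$ is SNC because $D$ is'' is too quick. The morphism $h$ contracts vertical curves that may meet the horizontal part of $D$, and this can produce a node or a tangency in $H$. That statement genuinely needs the argument of \cite{K13}, or a suitable choice of $h$, which is what the paper's citation covers.
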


\begin{proof}
By \cite[Theorem 2.1]{K13}, for a sufficiently large integer $n$, $|n(K_V+D^{\#})|$ defines an elliptic fibration, a quasi-elliptic fibration or a $\BP^1$-fibration, say $\rho: V \to \Delta$, onto a smooth projective curve $\Delta$. Since $V$ is an irrational ruled surface, $\rho$ cannot be a quasi-elliptic fibration. If $\rho$ is a $\BP^1$-fibration, we have (2) by \cite[Theorem 2.1 (II)]{K13}. Suppose that $\rho$ is an elliptic fibration. Then $W$ (as well as $X$, $V$) is an elliptic ruled surface and $\Delta = \BP^1$. Furthermore, every fiber of $\pi$ is a multiple of a smooth elliptic curve with self-intersection number zero. By the construction of $D^{\#}$ explained in Section 2, every irrational curve in $\Supp D$ has coefficient one in $D^{\#}$. Therefore, we obtain (1) from \cite[Theorem 2.1 (I)]{K13}. 
\end{proof}

\subsection{Proof of Theorem 1.1}

Let $X$ be an irrational ruled surface. Then we have a $\BP^1$-fibration $\varphi: X \to T$ onto a smooth projective curve $T$ of genus $h^1(\SO_X)$. Let $B$ be a reduced curve on $X$. 
We note the following result.

\begin{lem}
Let $X$, $B$ and $\varphi$ be the same as above. If $B \cdot F = 2$ for a fiber $F$ of $\varphi$, then $\kappa(X, K_X+B) = 0$ or $1$. In particular, $0 \leq \lkd{X-B} \leq \kappa(X, K_X+B) \leq 1$. 
\end{lem}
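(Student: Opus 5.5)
Write $F$ for a general fiber of $\varphi$; then $F\cdot(K_X+B)=-2+2=0$, and we need two things: $\kappa(X,K_X+B)\le 1$ and $\kappa(X,K_X+B)\ge 0$. Once these hold, Lemma 3.1 applied to $F\cdot B=2\ge 2$ gives $\lkd{X-B}\neq-\infty$. Since $\lkd{X-B}\le \kappa(X,K_X+B)$ in general, the chain $0\le\lkd{X-B}\le\kappa(X,K_X+B)\le 1$ then follows.

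\emph{Upper bound.} Suppose $\kappa(X,K_X+B)=2$. Then $K_X+B$ is big, so for large $n$ we can write $n(K_X+B)\sim A+E$ with $A$ ample and $E$ effective. Intersecting with the nef class $F$ gives
$$0=nF\cdot(K_X+B)=F\cdot A+F\cdot E\ge F\cdot A>0,$$
a contradiction. Equivalently, every member of $|n(K_X+B)|$ has degree $0$ on $F$, so it consists of fiber components. Hence the image of $\Phi_{|n(K_X+B)|}$ has dimension at most one, since the map factors through $\varphi$ on a general fiber. Thus $\kappa(X,K_X+B)\le 1$.

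\emph{Lower bound.} The cleanest route avoids computing sections directly and uses Lemma 3.1. Its implication (2)$\Rightarrow$(1) (via (1)$\Leftarrow$(3) and (3)$\Leftarrow$(2)) states precisely that $\kappa(X,K_X+B)=-\infty$ forces $F\cdot B\le 1$. Since $F\cdot B=2$, we get $\kappa(X,K_X+B)\ne-\infty$. More precisely, Lemma 3.1 shows that (3) implies (1), so $\lkd{X-B}\ge 0$; then $\kappa(X,K_X+B)\ge\lkd{X-B}\ge 0$. This step is already established, so the main content of the lemma is the upper bound above.

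The only point needing care is that $B$ need not be SNC. The argument above never uses the SNC property: the bigness argument works for any Cartier divisor, and Lemma 3.1 was proved for arbitrary reduced $B$ by passing to an SNC resolution. I therefore expect no real obstacle. The step to double-check is the bigness decomposition, which is standard: Kodaira's lemma holds in any characteristic for big divisors on projective surfaces.
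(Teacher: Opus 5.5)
Your proposal is correct and follows the paper's proof. The lower bound $0\le\lkd{X-B}\le\kappa(X,K_X+B)$ comes from Lemma 3.1, since $F\cdot B=2>1$. The upper bound comes from $(K_X+B)\cdot F=0$, which forces every member of $|n(K_X+B)|$ to be supported on fiber components; your Kodaira-lemma argument (a big divisor would have positive degree on the nef class $F$) is just an equivalent phrasing of that step.
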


\begin{proof}
By Lemma 3.1, $0 \leq \lkd{X-B} \leq \kappa(X, K_X+B)$. Since $(K_X + B) \cdot F = 0$, $\Supp (K_X + B)$ consists only of fiber components of $\varphi$. So, $\kappa(X, K_X+B) \leq 1$. This proves the lemma. 
\end{proof}

We complete the proof of Theorem 1.1. 
\medskip

\noindent
\textbf{Case: $h^1(\SO_X) \geq 2$.}
We complete the proof of (2) of Theorem 1.1. The assertion follows from the results of Sections 3 and 4 when $\lkd{X-B} \leq 0$. 
In fact, the case $\lkd{X-B} = 0$ does not take place. We assume that $\lkd{X-B} = 1$ and use the notations in \S \S 5.1. 
Then the fibration $\rho$ in Lemma 5.1 is a $\BP^1$-fibration and $\tilde{F} \cdot D = 2$ for a fiber $\tilde{F}$ of $\rho$. So $\Delta = T$ and $\rho$ induces a $\BP^1$-fibration, say $\varphi$, on $X$. 

Let $B_0$ be the sum of horizontal components of $B$ with respect to $\varphi$. Then $B \cdot F = B_0 \cdot F = 2$ for a fiber $F$ of $\varphi$ and $B - B_0$ is contained in fibers of $\varphi$. 
Let $g: X \to \Sigma:= \BP_T(\SE)$ be a birational morphism onto a $\BP^1$-bundle $\Sigma$ over $T$ and let $E_1, \ldots, E_n$ be the exceptional curves for $g$ not contained in $B$. Set $B':= B + E_1 + \cdots + E_n$. 
Then, $1 = \lkd{X-B} \leq \lkd{X-B'} \leq \kappa(X, K_X + B') \leq 1$ by Lemma 5.2. 
So $\lkd{X-B'} = \kappa(X, K_X + B') = 1 = \lkd{X-B}$. 
It is clear that $g(B')$ is one of (iv) and (v) in (2) of Theorem 1.1. This completes the proof of (2) of Theorem 1.1.

We prove (1) of Theorem 1.1 in this case.
When $\lkd{X-B} = 1$, $\lkd{X-B} = \kappa(X, K_X + B)$ follows from the argument as in the previous paragraph.
 The assertion follows from Lemma 3.1 and Lemma 4.1 when $\lkd{X-B} \leq 0$ because $B$ is then an SNC-divisor and so $\lkd{X-B} = \kappa(X, K_X+B)$.
If $\lkd{X-B} = 2$, then $2 = \lkd{X-B} \leq \kappa(X, K_X+B) \leq 2$ and so $\lkd{X-B} = \kappa(X, K_X+B) = 2$. This proves (1) of Theorem 1.1 when $h^1(\SO_X) \geq 2$.
\medskip

\noindent
\textbf{Case: $h^1(\SO_X) = 1$.}
We prove (1) of Theorem 1.1 and complete the proof of (3) of Theorem 1.1. We prove the following lemma.

\begin{lem}
With the same notations and assumptions as in Lemma {\rm 5.1}, assume further that the fibration $\rho$ is an elliptic fibration. Then $B$ is an SNC-divisor.
\end{lem}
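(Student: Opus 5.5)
The plan is to argue by contradiction, following the pattern of Cases 1 and 3 in the proof of Proposition 4.3. The contradiction will come from comparing $K_V+D^{\#}$ with the pull-back $h^*(K_W+C)$ given by (5.1). Suppose $B$ is not an SNC-divisor, so that $\mu\neq \id$. By the minimality of $\rho(\tilde{X})$, the last exceptional curve $\tilde{E}$ of $\mu$ is a $(-1)$-curve contained in $\tilde{B}$, and it satisfies one of the following:
\begin{itemize}
\item[(a)] $\tilde{E}\cdot(\tilde{B}-\tilde{E})\geq 3$;
\item[(b)] $\tilde{E}\cdot(\tilde{B}-\tilde{E})=2$ and $\tilde{E}$ meets only one component of $\tilde{B}-\tilde{E}$.
\end{itemize}
Otherwise $\tilde{E}$ could be contracted while keeping $\tilde{B}$ SNC.

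The first step is to show that $f_*(\tilde{E})\neq 0$ and that $Z:=f_*(\tilde{E})$ is a rational component of $D$ lying in $\Supp\lfloor D^{\#}\rfloor$. In $\tilde{B}$, the curve $\tilde{E}$ is not a component of an admissible twig, rod or fork: it is a $(-1)$-curve, and in case (a) it is a branching component, while in case (b) it meets a single component twice. So $\tilde{E}$ has coefficient $1$ in $\tilde{B}^{\#}$. The morphism $f$ only contracts $(-1)$-curves $E$ with $E\not\subset\Supp\lfloor B^{\#}\rfloor$ together with bark components (Lemmas 2.2 and 2.5), and superfluous components are excluded. I would check that a curve with coefficient $1$ and such intersection behaviour survives these contractions, and keeps coefficient $1$ in $D^{\#}$, exactly as in Proposition 4.3. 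This bookkeeping is the most technical point: one must make sure that $\tilde{E}$ does not become superfluous or part of a new twig after earlier contractions. I would handle it by noting that contractions only increase $Z\cdot(D-Z)$ locally, and that a $(-1)$-curve meeting $D-Z$ in at least two points counted with multiplicity, but not in two distinct components transversally, is never superfluous.

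Next I would locate $Z$ with respect to $\rho$ and $h$. If $Z$ dominated $\Delta$, then $h_*(Z)$ would be a horizontal component of $C=h_*(D^{\#})$ with coefficient $1$. By (1-1), however, $C$ is a sum of fibers $F_i$ of $\pi$, and these are elliptic, not rational. Hence $Z$ is vertical for $\rho$. Every fiber of $\pi$ is a multiple of a smooth elliptic curve, so the rational curve $Z$ must be $h$-exceptional.

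The final step is a discrepancy computation. Write
\[
K_V=h^*K_W+\sum_j e_j G_j
\]
with the $G_j$ the $h$-exceptional curves and all $e_j\geq 1$. Since $K_V+D^{\#}\equiv h^*(K_W+C)$, both sides being the pull-back of the same $\Q$-divisor on $\Delta$, and since intersection forms on exceptional curves are negative definite, we obtain
\[
D^{\#}=h^*C-\sum_j e_j G_j .
\]
The divisor $C$ is a disjoint union of smooth curves $F_i$ with coefficient $1$, so $(W,C)$ is log smooth. For any sequence of point blow-ups, the coefficient of an exceptional curve in $h^*C$ is at most its discrepancy $e_j$, with equality impossible beyond $\leq 0$. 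The reason is that at each blow-up the centre lies on at most one component of the boundary, which is smooth, so the coefficient satisfies $\leq e_j$. Therefore every $h$-exceptional curve has coefficient at most $0$ in $D^{\#}$. This contradicts the coefficient $1$ of $Z$, so $B$ is an SNC-divisor.
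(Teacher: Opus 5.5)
Your proof is correct, but the final contradiction is reached by a different route from the paper's.

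\textbf{What is shared.} Like you, the paper shows that $Z=f_*(\tilde{E})\neq 0$, that $Z$ has coefficient one in $D^{\#}$, and that $Z$ must be $h$-exceptional because $C$ consists of elliptic curves.

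\textbf{How the paper finishes.} From $Z^2\geq -1$ it deduces that $Z$ is a $(-1)$-curve near which $f$ is an isomorphism. It then uses Lemma 2.1: every component of the connected component $D^{(1)}\ni Z$ has positive coefficient in $D^{\#}$. It also uses that $\tilde{E}$ meets some $\tilde{F}_i$. It then argues that blowing down $Z$ makes the image of $D^{(1)}$, and hence $C=h_*(D^{\#})$, non-SNC.

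\textbf{How you finish.} You use $K_V+D^{\#}=h^*(K_W+C)$ together with the fact that $(W,C)$ is plt, since $C$ is a disjoint union of smooth curves. This forces every $h$-exceptional curve to have coefficient $\leq 0$ in $D^{\#}$.

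\textbf{Comparison.} Your route has three advantages:
\begin{itemize}
\item[(1)] It does not need to track what the remaining contractions in $h$ do near the non-SNC point; the paper settles this in one line.
\item[(2)] It uses neither $\tilde{E}\cdot\sum\tilde{F}_i\geq 1$ nor Lemma 2.1.
\item[(3)] It proves more: $D^{\#}=h^{-1}_*C$, so no rational component of $D$ survives in $D^{\#}$.
\end{itemize}
The paper's route stays within the combinatorics of peeling and avoids discrepancies.

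\textbf{Step 3.} Your inductive justification should say explicitly that it carries the hypothesis that earlier exceptional coefficients of $h^*C-\sum_j e_jG_j$ are $\leq 0$. Then the multiplicity of this divisor at the next centre is at most $1$, so the new coefficient is $\leq 0$.

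\textbf{Step 1.} The bookkeeping you worry about can be bypassed as in the paper.
\begin{itemize}
\item[(1)] If $f$ contracted $\tilde{E}$, this would happen while $\tilde{E}$ is still a $(-1)$-curve with an unchanged neighbourhood.
\item[(2)] That contraction would produce a non-SNC boundary, which later blow-downs cannot repair, whereas $D$ is SNC. Hence $f_*(\tilde{E})\neq 0$.
\item[(3)] Then $f_*(\tilde{E})^2\geq -1$ keeps it out of every admissible twig, rod or fork, so its coefficient in $D^{\#}$ is one.
\end{itemize}
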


\begin{proof}
By (1) of Lemma 5.1, $C = \sum_{i=1}^r F_i$ is a sum of $r$ fibers of $\pi$, here $F_i$ is an elliptic curve with $F_i^2 = 0$ for $i = 1, \ldots, r$. Let $\tilde{F}_i$ (resp.\ $F'_i$) be the proper transform of $F_i$ on $\tilde{X}$ (resp.\ $X$). Then $\tilde{B} - \sum_{i=1}^r \tilde{F}_i$ (resp.\ $B - \sum_{i=1}^r F'_i$) consists only of smooth rational curves, which are fiber components of the $\BP^1$-fibration $\varphi$ on $X$ (resp.\ the $\BP^1$-fibration $\varphi \circ \mu$ on $\tilde{X}$). 
So $B - \sum_{i=1}^r F'_i$ is an SNC-divisor on $X$. 

Suppose to the contrary that $B$ is not an SNC-divisor. 
Then, by the minimality of $\mu: \tilde{X} \to X$, the last exceptional curve, say $\tilde{E}$, in the process of $\mu$ is an irreducible component of $\tilde{B}$ and $\tilde{E} \cdot (\tilde{B}-\tilde{E}) \geq 2$. 
Further, $\tilde{E} \cdot (\sum_{i=1}^r \tilde{F}_i) \geq 1$. 
The image of $\tilde{B}$ by the blowing-down of $\tilde{E}$ is not an SNC-divisor. 
So we see that $f_*(\tilde{E}) \not= 0$.
Since $f_*(\tilde{E})^2 \geq -1$, the coefficient of $f_*(\tilde{E})$ in $D^{\#}$ equals one. Since $C = g_*(D^{\#}) = \sum_{i=1}^r F_i$ is a disjoint union of elliptic curves, $g_*(f_*(\tilde{E})) = 0$. In particular, $f_*(\tilde{E})$ is a $(-1)$-curve. So $f$ is isomorphic on a Zariski open subset containing $\tilde{E}$. 
Let $D^{(1)}$ be the connected component of $D$ containing $f_*(\tilde{E})$. Since $f_*(\tilde{E})$ is a $(-1)$-curve, we infer from Lemma 2.1 that every irreducible component of $D^{(1)}$ has positive coefficient in $D^{\#}$. Then, the image of $D^{(1)}$ by the blowing-down of $f_*(\tilde{E})$ is not an SNC-divisor. So $C = g_*(D^{\#})$ is not an SNC-divisor, which is a contradiction. 
This proves Lemma 5.3.
\end{proof}

We prove (1) of Theorem 1.1 in this case. If $\lkd{X-B} = 2$, then $\lkd{X-B} = \kappa(X, K_X+B) = 2$. If $\lkd{X-B} \leq 0$, then $B$ is an SNC-divisor by Lemma 3.1 and Proposition 4.3. So, $\lkd{X-B} = \kappa(X, K_X + B)$. We consider the case $\lkd{X-B} = 1$ and use the notations in Lemma 5.1. If the fibration $\rho$ is a $\BP^1$-fibration, then by using the same argument as in the case $h^1(\SO_X) \geq 2$, we see that $\lkd{X-B} = \kappa(X, K_X+B) = 1$. If $\rho$ is an elliptic fibration, then $B$ is an SNC-divisor by Lemma 5.3, and hence $\lkd{X-B} = \kappa(X, K_X+B)$. This proves (1) of Theorem 1.1.

We complete the proof of (3) of Theorem 1.1. The assertion follows from the results of Sections 3 and 4 when $\lkd{X-B} \leq 0$. So we assume that $\lkd{X-B} = 1$ and use the notations in \S \S 5.1.
If the fibration $\rho$ on $V$ is a $\BP^1$-fibration, $\tilde{F} \cdot D = 2$ for a fiber $\tilde{F}$ of $\rho$. This $\BP^1$-fibration $\rho$ induces a $\BP^1$-fibration, say $\varphi$, on $X$, i.e., $\varphi = \rho \circ f \circ \mu^{-1}: X \to \Delta$ and $g(\Delta) = h^1(\SO_X) = 1$. Then $F \cdot B = 2$ for a fiber $F$ of $\varphi$. By the same argument as in the proof of (2) of Theorem 1.1 in \S \S 5.2, we obtain a curve $B'$ such that $B \subset B'$, $\lkd{X-B'} = \lkd{X-B} = 1$ and $B'$ satisfies the conditions in (3) of Theorem 1.1. In fact we have a birational morphism $g: X \to \Sigma$ onto a $\BP^1$-bundle $\Sigma$ over $\Delta$ such that $g(B')$ is one of (iv) and (v) in (3) of Theorem 1.1. 

Suppose that $\rho$ is an elliptic fibration. Then Lemma 5.4 implies that $B$ is an SNC-divisor, namely, $\mu = \id$. 
Let $E_1, \ldots, E_n$ be the exceptional curves for $g:= h \circ f$ not contained in $B$ and set $B' := B + E_1+ \cdots + E_n$. 
We know that $\Sigma:= g(X)$ is a $\BP^1$-bundle over $T$ and $g(B') = g(B)$ is (iv) in (3) of Theorem 1.1. 
Since $g(B') = C$, $\lkd{X-B'} = \lkd{X-B} = 1$.
\medskip

The proof of  Theorem 1.1 is thus completed.

\subsection{Proof of Theorem 1.2}

In this subsection, we prove Theorem 1.2. We use the same notations in \S \S 5.1, here we assume further that $B$ is an SNC-divisor. So $\mu = \id$, $\tilde{X} = X$, and $\tilde{B} = B$. The fibration $\pi$ in Lemma 5.1 is either an elliptic fibration or a $\BP^1$-fibation. By Lemma 2.3 and by the construction of $(V,D)$, we know that $(K_X+B)^{+} = f^*(K_V + D^{\#})$. Further, $(K_X+B)^{+} = (h \circ f)^*(K_W + C)$. 
\medskip

\noindent
{\bf Part I: $\pi$ is a $\BP^1$-fibration.} We prove Theorem 1.2 when $\pi$ is a $\BP^1$-fibration. The argument in this case is almost similar to Cases 1$\sim$4 in \cite[Section 3]{K25}. For the reader's convenience, we reproduce the argument. 

We use the notations in (2) of Lemma 5.1. For a positive integer $m$, $|\lfloor m(K_X+B)^+ \rfloor|$ induces a $\BP^1$-fibration on $X$ if and only if so does $| \lfloor m(K_{W} + C) \rfloor|$ on $W$ because $(K_X+B)^{+} = (h \circ f)^*(K_W + C)$.  

By $\lkd{X-B} = \lkd{V-D} = 1$, we have
$$
\deg (K_{\Delta} + \delta) + \sum_{i=1}^j d_i > 0. \eqno{(5.3)}
$$
For a positive integer $m$, we set 
$$
\delta_m := m(K_{\Delta} +\delta)+ \sum_{i=1}^j \lfloor md_i \rfloor \pi(F_i).
$$
Here $\lfloor r \rfloor$ means the integral part of a real number $r$. 
We need to find the least integer $M$ such that, for any $m \geq M$, 
$$
\deg \delta_m = m (2g(\Delta) -2 + t) + \sum_{i=1}^j \lfloor md_i \rfloor \geq 2g(\Delta) + 1 \eqno{(5.4)}
$$
holds.
We consider the following cases separately.
\medskip

\noindent
\textbf{Case 1: }$t\geq 3$. For any $m \geq 1$, $\deg \delta_m \geq 2m g(\Delta) + m(t-2) \geq m (2g(\Delta) + 1)$. So, if $m \geq 1$, (5.4) holds.
\medskip

\noindent
\textbf{Case 2:} $g (\Delta) \geq 2$ (and $t \leq 2$). We consider the following subcases separately.
\smallskip

\noindent
\textbf{2-1:} $t \geq 0$. Then $\deg \delta_m \geq m (2 g(\Delta) -2 ) = 2m (g(\Delta)  - 1)$. So, if $m \geq 3$, (5.4) holds.
\smallskip

\noindent
\textbf{2-2:} $t < 0$. In this subcase, $\ch(k) = 2$. By (2) of Lemma 5.1, $H$ is irreducible, $\pi|_{H}: H \to \Delta$ is a purely inseparable double covering  and $t = 1 - g(\Delta)$.
Then $\deg \delta_m \geq m (g(\Delta)-1) $. So, if $\displaystyle m \geq 2 + \frac{3}{g(\Delta) - 1}$, (5.4) holds. 
\medskip

\noindent
\textbf{Case 3:} $g(\Delta) = 1$ and $1 \leq t \leq 2$. We consider the following subcases separately.
\smallskip

\noindent
\textbf{3-1:} $t = 2$. Then $\deg \delta_m \geq 2m$. So, if $m \geq 2$, (5.4) holds.
\smallskip

\noindent
\textbf{3-2:} $t = 1$. Then $\deg \delta_m \geq m$. So, if $m \geq 3$, (5.4) holds.
\medskip

\noindent
\textbf{Case 4:} $g(\Delta) = 1$ and $t \leq 0$. Then $t = 0$ by (2) of Lemma 5.1. By (5.3), we have $j > 0$. We consider the following subcases separately. 
\smallskip

\noindent
\textbf{4-1:} $H = H_1 + H_2$, where $H_1$ and $H_2$ are sections of $\pi$. 
Since $t = 0$, $H_1 \cap H_2 = \emptyset$. 
By (2) of Lemma 5.1, $\displaystyle d_i = 1 - \frac{1}{m_i} \geq \frac{1}{2}$ for $i = 1, \ldots, j$, where $m_i \geq 2$ or $m_i = +\infty$. Then we have
$$
\deg \delta_m = \sum_{i=1}^j \lfloor md_i \rfloor  \geq \lfloor md_1 \rfloor \geq \lfloor \frac{m}{2} \rfloor.
$$
So, if $m \geq 6$, (5.4) holds.
\smallskip

\noindent
\textbf{4-2:} $H$ is irreducible and $\pi|_{H}$ is separable. By (2) of Lemma 5.1, $H$ is smooth and $\pi|_{H}$ is an unramified double covering. Further, $\displaystyle d_i = 1 - \frac{1}{m_i} \geq \frac{1}{2}$ for $i = 1, \ldots, j$, where $m_i \geq 2$ or $m_i = +\infty$. 
Then we have
$$
\deg \delta_m = \sum_{i=1}^j \lfloor md_i \rfloor  \geq \lfloor md_1 \rfloor \geq \lfloor \frac{m}{2} \rfloor.
$$
So, if $m \geq 6$, (5.4) holds. 
\smallskip

\noindent
\textbf{4-3:} $H$ is irreducible and $\pi|_{H}$ is not separable. In this subcase, $\ch (k) = 2$ and $\pi|_{H}$ is a purely inseparable double covering. So $\# F \cap H = 1$ for every fiber $F$ of $\pi$. By (2) of Lemma 5.2, $\displaystyle d_i = \frac12 \left( 1- \frac{1}{m_i} \right) \geq \frac{1}{4}$ for $i = 1, \ldots, j$, where $m_i \geq 2$ or $m_i = +\infty$. Then we have
$$
\deg \delta_m = \sum_{i=1}^j \lfloor md_i \rfloor  \geq \lfloor md_1 \rfloor \geq \lfloor \frac{m}{4} \rfloor.
$$
So, if $m \geq 12$, (5.4) holds. 
\smallskip

Therefore, in Part I, $M = 12$. 
\medskip

\noindent
\textbf{Part II: $\pi$ is an elliptic fibration.} We prove Theorem 1.2 when $\pi$ is an elliptic fibration. 
We use the notations in (1) of Lemma 5.1. For a positive integer $m$, $|\lfloor m(K_X+B)^+ \rfloor|$ induces an elliptic fibration on $X$ if and only if so does $| \lfloor m(K_{W} + C) \rfloor|$ on $W$ because $(K_X+B)^{+} = (h \circ f)^*(K_W + C)$.  

Since $\pi$ is relatively minimal, $W$ is a relatively minimal elliptic surface. By (1) of Lemma 5.1, $\Delta = \BP^1$, $t = \deg \delta = \chi(\SO_W) + {\rm length} \ST = {\rm length} \ST$.
Furthermore, we have 
$$
K_W + C = \pi^*((t-2)P) + \sum_{r = 1}^s a_{r} E_{r} + \sum_{i=1}^j F_i,
$$
where $P$ is a point of $\Delta = \BP^1$, $j, s \geq 0$, $a_{r} E_{r}$ ranges over all multiple fibers of $\pi$ with multiplicity $m_{r}$, $0 \leq a_{r} < m_{r}$, and $a_{r} = m_{r} - 1$ if $m_{r} E_{r}$ is not a wild fiber of $\pi$. Since $W$ is ruled, $K_W$ is not pseudo effective. This implies that $t = 0,1$ and $C = \sum_{i=1}^j F_i > 0$, i.e., $j \geq 1$. 
Since $\lkd{X-B} = 1$, $(K_W + C) \cdot A > 0$ for any ample divisor $A$ on $W$. So we have
$$
t-2 + \sum_{r = 1}^s \frac{a_{r}}{m_{r}} + \sum_{i=1}^j \frac{1}{n_i} > 0, \eqno{(5.5)}
$$
where $n_i F_i$ is the scheme-theoretic fiber of $\pi$ containing $F_i$. 

For a positive integer $m$, we set 
$$
\delta_m := m(t-2)P + \sum_{r = 1}^s \lfloor \frac{m a_{r}}{m_{r}} \rfloor  \pi(E_{r}) + \sum_{i=1}^j \lfloor \frac{m}{n_i} \rfloor \pi(F_i).
$$
Since $\Delta = \BP^1$,  we need to find the least integer $M$ such that, for any $m \geq M$, 
$$
\deg \delta_m =  m(t-2) + \sum_{r= 1}^s \lfloor \frac{m a_{r}}{m_{r}} \rfloor  + \sum_{i=1}^j \lfloor \frac{m}{n_i} \rfloor \geq 1 (= 2g(\Delta) + 1) \eqno{(5.6)}
$$
holds.
We consider the following cases separately.
\medskip

\noindent
{\bf Case 1: $t=0$.} In this case, $a_{r} = m_{r} - 1$ for $r = 1, \ldots, s$. By (5.5), we have
$$
\sum_{r = 1}^s \frac{m_r-1}{m_r}  + \sum_{i=1}^j \frac{1}{n_i} > 2. \eqno{(5.7)}
$$
We consider the following subcases separately.
\medskip 

\noindent
\textbf{1-1:} $s=0$. Note that $W \cong \BP^1 \times T$ and $n_i = 1$ for $i=1, \ldots, j$. Then
$$
K_W+C = -2\pi^*(P) + \sum_{i=1}^j F_i = \pi^*((j-2)P)
$$
and so $\deg \delta_m = j - 2$. 
By (5.7), $j \geq 3$. So, if $m \geq 1$, (5.6) holds. 
\smallskip

\noindent
\textbf{1-2:} $s=1$. By \cite[Corollary 4.2]{KU85}, the unique multiple fiber $m_1 E_1$ must be a wild fiber. This contradicts $t = 0$. Hence, this subcase does not take place. 
\smallskip

\noindent
\textbf{1-3:} $s=2$. By (5.7), $\displaystyle \sum_{i=1}^j \frac{1}{n_i} > \frac{1}{m_1} + \frac{1}{m_2}$. Since $n_i \in \{ 1, m_1, m_2 \}$ for $i = 1, \ldots, j$, we may assume that $n_1 = 1$. Then $K_W + C = -\pi^*(P) + (m_1-1) E_1 + (m_2-1) E_2 + \sum_{i=2}^j F_j$ and so
\begin{eqnarray*}
\delta_m & = & -mP + \lfloor m\left( 1 - \frac{1}{m_1}\right) \rfloor  \pi(E_1) + \lfloor m\left(1 - \frac{1}{m_2}\right) \rfloor  \pi(E_2)+ \sum_{i=2}^j \lfloor \frac{m}{n_i} \rfloor \pi(F_i) \\
  & \sim &  \left(-m +  \lfloor m\left( 1 - \frac{1}{m_1}\right) \rfloor  + \lfloor m\left( 1 - \frac{1}{m_2}\right) \rfloor +  \sum_{i=2}^j \lfloor \frac{m}{n_i} \rfloor \right) P.
\end{eqnarray*}
\smallskip

\noindent
\textbf{1-3-1}. Suppose that $j = 1$. We may assume that $m_1 \geq m_2$. By (5.7), $m_2 \geq 3$. Then 
\begin{eqnarray*}
\deg \delta_m & \geq & -m + \lfloor m\left( 1 - \frac{1}{m_1}\right) \rfloor  + \lfloor m\left( 1 - \frac{1}{m_2}\right) \rfloor  \\
  & \geq &  -m + \lfloor \frac{m}{2} \rfloor  + \lfloor \frac{2m}{3} \rfloor.
\end{eqnarray*}
So, if $m \geq 8$,  (5.6) holds.
\smallskip

\noindent
\textbf{1-3-2}. Suppose that $j \geq 2$. Note that $n_2 \in \{ 1, m_1, m_2 \}$. If $n_2 = 1$, then 
$$
\deg \delta_m \geq \lfloor m\left(1 - \frac{1}{m_1}\right) \rfloor  + \lfloor m\left(1 - \frac{1}{m_2}\right) \rfloor \geq  2  \lfloor \frac{m}{2} \rfloor.
$$
So, if $m \geq 2$, (5.6) holds.

If $n_2 \not= 1$, then we may assume $m_2 = n_2$. Then $K_W + C = (m_1-1) E_1 + \sum_{i=3}^j F_i$. Hence,
$$
\deg \delta_m \geq \lfloor m\left( 1 - \frac{1}{m_1}\right) \rfloor  \geq \lfloor \frac{m}{2} \rfloor.
$$
So, if $m \geq 2$, (5.6) holds.
\smallskip

Therefore, in 1-3, $M = 8$.
\smallskip

\noindent
\textbf{1-4:} $s \geq 3$. Since $W$ is ruled, $s = 3$ and $j \geq 1$. Then $K_W + C = -2\pi^*(P) + (m_1-1) E_1 + (m_2-1) E_2 + (m_3-1) E_3 + \sum_{i=1}^j F_j$. 
\smallskip

\noindent
\textbf{1-4-1}. We assume that $\exists i \in \{ 1, \ldots, j \}$ s.t. $n_i = 1$. Then we may assume that $n_1 = 1$. So $K_W + C = -\pi^*(P) + (m_1-1) E_1 + (m_2-1) E_2 + (m_3-1) E_3 + \sum_{i=2}^j F_j$. We have
\begin{eqnarray*}
\deg \delta_m & \geq & -m + \lfloor m\left( 1 - \frac{1}{m_1}\right) \rfloor  + \lfloor m\left( 1 - \frac{1}{m_2}\right) \rfloor +  \lfloor m\left( 1 - \frac{1}{m_3}\right) \rfloor  \\
  & \geq &  -m + 3 \lfloor \frac{m}{2} \rfloor.
\end{eqnarray*}
So, if $m \geq 4$, (5.6) holds. 
\smallskip

\noindent
\textbf{1-4-2}. We assume that $n_i > 1$ for $i = 1, \ldots, j$. Then we may assume further that $F_1 = E_1$. Then $K_W + C = -\pi^*(P) + (m_2-1) E_2 + (m_3-1) E_3 + \sum_{i=2}^j F_j$. By (5.7),
$$
\frac{m_2-1}{m_2} + \frac{m_3-1}{m_3}   + \sum_{i=2}^j \frac{1}{n_i} > 1.
$$

If $j=1$, then $\displaystyle 1 > \frac{1}{m_2} + \frac{1}{m_3}$. So, we may assume that $m_3 \geq 3$. Then we have
\begin{eqnarray*}
\deg \delta_m & \geq & -m + \lfloor m\left( 1 - \frac{1}{m_2}\right) \rfloor +  \lfloor m\left( 1 - \frac{1}{m_3}\right) \rfloor  \\
  & \geq &  -m + \lfloor \frac{m}{2} \rfloor +  \lfloor \frac{2m}{3} \rfloor.
\end{eqnarray*}
So, if $m \geq 8$, (5.6) holds.

If $j \geq 2$, then we may assume further that $n_2 = m_2$. Then
$$
\deg \delta_m \geq \lfloor m\left( 1 - \frac{1}{m_3}\right) \rfloor  \geq  \lfloor \frac{m}{2} \rfloor.
$$
So, if $m \geq 2$, (5.6) holds.
\smallskip

Therefore, in 1-4, $M = 8$. 
\medskip

\noindent
\textbf{Case 2: $t=1$.} We may assume that $m_1 E_1$ is the wild fiber of $\pi$. Then $a_r = m_r - 1$ for $r = 2, \ldots, s$. By (5.5), 
$$
\frac{a_1}{m_1} +  \sum_{r = 2}^s \frac{m_r-1}{m_{r}} + \sum_{i=1}^j \frac{1}{n_i} > 1. \eqno{(5.8)}
$$
Set $p = \ch(k)$.  We consider the following subcases separately.
\medskip

\noindent
\textbf{2-1:} $s=1$. Then $K_W+C = -\pi^*(P) +  a_1 E_1 + \sum_{i=1}^j F_j$ and 
$$
\delta_m = -mP + \lfloor \frac{ma_1}{m_1} \rfloor  \pi(E_1) + \sum_{i=1}^j \lfloor \frac{m}{n_i} \rfloor \pi(F_i).
$$
By \cite[Theorem 2]{BM77}, either $a_1 = m_1 - 1$ or $a_1 = m_1 - \nu_1 -1$, where $\nu_1$ is a positive integer such that $\nu_1 \ | \ m_1$. 
\smallskip

\noindent
\textbf{2-1-1.} Suppose that $a_1 = m_1 - 1$. By (5.8), $\displaystyle \sum_{i=1}^j \frac{1}{n_i} > \frac{1}{m_1}$. Hence either $j \geq 2$ or $j=1$ and $n_1 = 1$. 

If $j \geq 3$, then we may assume that $n_2 = n_3 = 1$. Then we have
$$
\deg \delta_m \geq -m +  \lfloor m \left( 1-\frac{1}{m_1} \right) \rfloor + \lfloor \frac{m}{n_1} \rfloor + 2m \geq m \geq 1.
$$
So, if $m \geq 1$, (5.6) holds. 

If $j = 2$, then we may assume that $n_2 = 1$. Then we have
$$
\deg \delta_m =  \lfloor m \left( 1-\frac{1}{m_1} \right) \rfloor + \lfloor \frac{m}{n_1} \rfloor  \geq \lfloor \frac{m}{2} \rfloor + \lfloor \frac{m}{n_1} \rfloor.
$$
So, if $m \geq 2$, (5.6) holds.

If $j = 1$, then $n_1 = 1$ and so
$\displaystyle 
\deg \delta_m \geq  \lfloor \frac{m}{2} \rfloor
$.
So, if $m \geq 2$, (5.6) holds.
\smallskip

Therefore, in 2-1-1, $M = 2$.
\smallskip

\noindent
\textbf{2-1-2.} Suppose that $a_1 = m_1 - \nu_1 - 1$ and $\nu_1 \ | \ m_1$.
By (5.8), $\displaystyle \sum_{i=1}^j \frac{1}{n_i} > \frac{\nu_1 + 1}{m_1}$.
By \cite[Corollary 4.2]{KU85}, $m_1 = p^{\nu}$ with $\nu > 0$ and $\nu_1 =1$. Hence  $\displaystyle \sum_{i=1}^j \frac{1}{n_i} > \frac{2}{m_1}$.

Suppose that $j \geq 2$. Then we may assume further that $n_2 = 1$. Then we have
$$
\deg \delta_m \geq  \lfloor m \left( 1-\frac{2}{m_1} \right) \rfloor + \lfloor \frac{m}{n_1}  \rfloor .
$$
If $m_1 = 2$, then $\displaystyle \deg \delta_m \geq   \lfloor \frac{m}{2}  \rfloor$ since $n_1 \leq 2$. So, if $m \geq 2$, (5.6) holds. If $m_1 \geq 3$, then $\displaystyle 1 - \frac{2}{m_1} \geq \frac13$. So we have
\begin{eqnarray*}
\deg \delta_m & \geq & -m + \lfloor m\left( 1 - \frac{2}{m_1}\right) \rfloor + \lfloor \frac{m}{n_1} \rfloor +  \lfloor \frac{m}{n_2} \rfloor  \\
  & \geq &   \lfloor \frac{m}{3} \rfloor +  \lfloor \frac{m}{n_1} \rfloor.
\end{eqnarray*}
So, if $m \geq 3$, (5.6) holds.

Suppose that $j = 1$. Then $\displaystyle \frac{1}{n_1} > \frac{2}{m_1}$ and so $m_1 \geq 3$ and $n_1=1$.
We have
\begin{eqnarray*}
\deg \delta_m & = & -m + \lfloor m\left( 1 - \frac{2}{m_1}\right) \rfloor + \lfloor \frac{m}{n_1} \rfloor  \\
  & = &  \lfloor m\left( 1 - \frac{2}{m_1}\right) \rfloor \\
  & \geq & \lfloor \frac{m}{3} \rfloor.
\end{eqnarray*}
So, if $m \geq 3$, (5.6) holds.
\smallskip

Therefore, in 2-1-2, $M = 3$.
\medskip

\noindent
\textbf{2-2:} $s=2$. Then $m_1 E_1$ and $m_2 E_2$ exhaust the multiple fibers of $\pi$ and $m_1 E_1$ is the unique wild fiber of $\pi$. 
Then $a_2 = m_2 -1$ and either $a_1 = m_1 - 1$ or $a_1 = m_1 - \nu_1 -1$ and $\nu_1 \ | \ m_1$. If $a_1 = m_1 - 1$, then $K_W = -\pi^*(P) + (m_1 - 1) E_1 + (m_2-1) E_2$. So $2K_W \geq 0$, which is a contradiction because $W$ is ruled. Hence,  $a_1 = m_1 - \nu_1 -1$ and $\nu_1 \ | \ m_1$. By (5.8), we have
$$
1 + \sum_{i=1}^j \frac{1}{n_i} > \frac{\nu_1 + 1}{m_1} + \frac{1}{m_2}.
$$
\smallskip

\noindent
\textbf{2-2-1.} We assume that $n_i = 1$ for some $i \in \{ 1, \ldots, j \}$. We may assume that $n_1 = 1$. Then 
\begin{eqnarray*}
\deg \delta_m & \geq & -m + \lfloor m\left( 1 - \frac{\nu_1 + 1}{m_1}\right) \rfloor + \lfloor m\left( 1 - \frac{1}{m_2}\right) \rfloor +  \lfloor \frac{m}{n_1} \rfloor  \\
  & \geq &  \lfloor m\left( 1 - \frac{1}{m_2}\right) \rfloor  \geq  \lfloor \frac{m}{2} \rfloor.
\end{eqnarray*}
So, if $m \geq 2$, (5.6) holds.
\smallskip

\noindent
\textbf{2-2-2.} We assume that $n_i \geq 2$ for $i = 1, \ldots, j$ and $j = 2$. Then we may assume that $F_1 = E_1$ and $F_2 = E_2$. Then $K_W + C = -\pi^*(P) + a_1 E_1 + (m_2-1) E_2 + E_1 + E_2 = (a_1+1)E_1 = (m_1-\nu_1)E_1$. Since $\nu_1 \ | \ m_1$ and $\lkd{X-B} = 1$, $m_1 \geq 2 \nu_1$. Then we have 
$$
\deg \delta_m = \lfloor m \left( 1- \frac{\nu_1}{m_1} \right) \rfloor \geq \lfloor m \left( 1- \frac{\nu_1}{2 \nu_1} \right) \rfloor  \geq \lfloor \frac{m}{2} \rfloor.
$$
So, if $m \geq 2$, (5.6) holds.
\smallskip

\noindent
\textbf{2-2-3:} Assume that $j=1$ and $n_1 \geq 2$. Then $F_1 = E_1$ or $E_2$. In 2-2-3, we assume that $F_1 = E_2$. Then $K_W + C = -\pi^*(P) + a_1 E_1 + (m_2-1) E_2 + E_2 = a_1 E_1$, where $a_1 = m_1 - \nu_1 - 1$ and $\nu_1 \ | \ m_1$. Since $\lkd{X-B} = 1$, $a_1 > 0$. By \cite[(1.6)]{KU85}, $m_1 = p^{\gamma} \nu_1$ for some $\gamma > 0$. 
We have
$$
\deg \delta_m = \lfloor m \left( 1 - \frac{\nu_1+ 1}{m_1}\right) \rfloor =  \lfloor m\left(1 - \frac{\nu_1+ 1}{p^{\gamma} \nu_1}\right) \rfloor =  \lfloor m \left( 1 - \frac{1}{p^{\gamma}} - \frac{1}{p^{\gamma} \nu_1} \right) \rfloor.
$$

If $p \geq 3$, then $\displaystyle 1 - \frac{1}{p^{\gamma}} - \frac{1}{p^{\gamma} \nu_1}  \geq \frac13$. So, if $m \geq 3$, (5.6) holds.

If $p = 2$, then, by $a_1 = m_1 - \nu_1 - 1 >0$, $m_1 \geq 4$. We have
$$
\deg \delta_m  = \lfloor m \left( 1 - \frac{1}{p^{\gamma}} - \frac{1}{p^{\gamma} \nu_1} \right) \rfloor \geq \lfloor \frac{m}{4} \rfloor.
$$
So, if $m \geq 4$, (5.6) holds. 

Therefore, in 2-2-3, $M = 4$.
\smallskip

\noindent
\textbf{2-2-4.} We assume that $j=1$, $n_1 \geq 2$ and $F_1 = E_1$. Then we have $K_W + C = - \pi^*(P) + (a_1 + 1) E_1 + (m_2 - 1)E_2$. By (5.8), $\displaystyle \frac{a_1+1}{m_1} + \frac{m_2-1}{m_2} > 1$. So,
$$
\frac{a_1+1}{m_1} > \frac{1}{m_2}.  \eqno{(5.9)}
$$
Since $a_1 = m_1 - \nu_1 -1$ and $\nu_1 \ | \ m_1$, it follows from \cite[(1.6) and (1.7)]{KU85} that $m_1 = p^{\gamma} \nu_1$ for some $\gamma > 0$. By \cite[Theorm 3.3]{KU85}, $(m_1, m_2 \ | \ \nu_1, m_2)$ satisfies the conditions $U_1$ and $U_2$ in \cite[Definition 3.2]{KU85}. By the condition $U_2$, we have $m_2 \ | \ m_1$. If $p \ | \ \nu_1$, then $m_1 \ | \ m_2$ by the condition $U_1$. If $p  \not| \ \nu_1$, then we have $m_2 = p^{\beta} \nu_1$ with a non-negative integer $\beta \leq \gamma$ by the condition $U_1$. Hence we get the following two cases.
\begin{enumerate}
\item[(i)]
$p \ | \ \nu_1$, $m_1 = m_2 = p^{\gamma} \nu_1$ and $\gamma \geq 1$.
\item[(ii)]
$p  \not| \ \nu_1$, $m_1 = p^{\gamma} \nu_1$,  $m_2 = p^{\beta} \nu_1$, $\gamma \geq 1$ and $\gamma \geq \beta$. 
\end{enumerate}
The above argument is the same as in \cite[p.\ 306]{KU85}. 
\smallskip

\noindent
Case (i). The we have
$$
\deg \delta_m  = - m + \lfloor m \left( 1 - \frac{1}{p^{\gamma}}\right) \rfloor + \lfloor m \left( 1 - \frac{1}{p^{\gamma}\nu_1}\right) \rfloor
$$
If $p \geq 3$, then $\nu_1 \geq 3$ and so $m_1 = m_2 \geq 9$. We have
$$
\deg \delta_m  \geq  - m + \lfloor \frac{2m}{3} \rfloor + \lfloor \frac{8m}{9} \rfloor
$$
So, if $m \geq 3$, (5.6) holds.
If $p = 2$, then $\nu_1 \geq 2$ and $m_1 = m_2 \geq 4$. We have
$$
\deg \delta_m  \geq  - m + \lfloor \frac{m}{2} \rfloor + \lfloor \frac{3m}{4} \rfloor.
$$
So, if $m \geq 6$, (5.6) holds.

In Case (i), $M = 6$.
\smallskip

\noindent
Case (ii). Then we have
$$
\deg \delta_m  = - m + \lfloor m \left( 1 - \frac{1}{p^{\gamma}}\right) \rfloor + \lfloor m \left( 1 - \frac{1}{p^{\beta}\nu_1} \right) \rfloor
$$
and $p \not| \ \nu_1$. 
If $p \geq 3$, then $p^{\beta} \nu_1 \geq 2$ and $p^{\gamma} \geq 3$. We have
$$
\deg \delta_m \geq  -m + \lfloor \frac{2m}{3} \rfloor + \lfloor \frac{m}{2} \rfloor.
$$
So, if $m \geq 8$, (5.6) holds.

Suppose that $p = 2$. If $\beta = 0$, then $\nu_1 \geq 3$. We have
$$
\deg \delta_m \geq  -m + \lfloor \frac{m}{2} \rfloor + \lfloor \frac{2m}{3} \rfloor.
$$
So, if $m \geq 8$, (5.6) holds.
If $\beta \geq 1$ and $\gamma \geq 2$, then $p^{\gamma} \geq 4$ and $p^{\beta} \nu_1 \geq 2$. We have
$$
\deg \delta_m \geq  -m + \lfloor \frac{3m}{4} \rfloor + \lfloor \frac{m}{2} \rfloor.
$$
So, if $m \geq 6$,  (5.6) holds.
If $\gamma = 1$ and $\beta > 0$, then $\gamma = \beta = 1$ and $m_1 = m_2 = 2\nu_1$ and $2 \not| \ \nu_1$. 
By (5.9), $\displaystyle \frac{\nu_1}{2\nu_1} > \frac{1}{2 \nu_1}$. By $2 \not| \nu_1$, $\nu_1 \geq 3$. 
So $m_1 = m_2 = 2 \nu_1 \geq 6$. We have
$$
\deg \delta_m \geq  -m + \lfloor \frac{m}{2} \rfloor + \lfloor \frac{5m}{6} \rfloor.
$$
So, if $m \geq 4$, (5.6) holds.

Therefore, in Case (ii), $M = 8$. 
\smallskip

Therefore, in 2-2-4, $M = 8$. The argument of 2-2 is thus completed.
\medskip

\noindent
\textbf{2-3:} $s \geq 3$. We easily see that this case does not take place since $W$ is ruled.
\smallskip

Therefore, in Part II, $M = 8$. 
\medskip

The proof of Theorem 1.2 is thus completed.


\end{document}